\newtheorem{theorem}{Theorem}[section]
\newtheorem{lemma}[theorem]{Lemma}
\newtheorem{remark}[theorem]{Remark}
\begin{document}

\title{Two Lorentzian Lattices}

\author
{Gert Heckman and Sander Rieken\\
Radboud University Nijmegen}

\date{\today}

\maketitle

\begin{abstract}
We prove a conjecture formulated in \cite{Heckman 2013}, which in turn
provides a good deal of evidence for the monstrous proposal of 
Daniel Allcock \cite{Allcock 2009}.
\end{abstract}

\section{Introduction}

Let $\mathbb{Z}^{n,1}$ be the odd unimodular Lorentzian lattice with basis
$e_0,e_1,\cdots,e_n$ and inner product
\[ (e_0,e_0)=-1,(e_0,e_p)=0,(e_p,e_q)=\delta_{pq} \]
for $p,q=1,\cdots,n$. Let $\mathbb{R}^{n,1}=\mathbb{R}\otimes\mathbb{Z}^{n,1}$ 
be the corresponding Lorentzian real vector space, and let 
\[ \mathbb{R}^{n,1}_+=\{x\in\mathbb{R}^{n,1};(x,x)<0,x_0>0\} \]
be the connected component of the light cone complement containing $e_0$
and let
\[ \mathbb{B}^n=\mathbb{R}^{n,1}_+/\mathbb{R}_+ \]
be the real hyperbolic ball of dimension $n$. The index $2$ subgroup of the 
Lorentz group $\mathrm{O}(\mathbb{R}^{n,1})$ preserving the component
$\mathbb{R}^{n,1}_+$ is called the forward Lorentz group and is denoted
$\mathrm{O}_+(\mathbb{R}^{n,1})$. Clearly $\mathrm{O}_+(\mathbb{R}^{n,1})$
has two connected components distinguished by the sign of the determinant.
It is well known that
\[ \Gamma^n=\mathrm{O}_+(\mathbb{Z}^{n,1})=
\mathrm{O}_+(\mathbb{R}^{n,1})\cap\mathrm{O}(\mathbb{Z}^{n,1}) \]
is a discrete subgroup of $\mathrm{O}_+(\mathbb{R}^{n,1})$ acting on
$\mathbb{B}^n$ properly discontinuously with cofinite volume. It contains 
reflections 
\[ s_{\alpha}(x)=x-2(x,\alpha)\alpha/\alpha^2 \]
in roots $\alpha\in\mathbb{Z}^{n,1}$ of norm $1$ or norm $2$. Here we denote
$\alpha^2=(\alpha,\alpha)$ for the norm of $\alpha\in\mathbb{Z}^{n,1}$.
The next theorem is due to Vinberg (for $n\leq17$ and for $n=18,19$ in 
collaboration with Kaplinskaya) \cite{Vinberg 1980}. 

\begin{theorem}\label{Vinberg theorem}
For $2\leq n\leq19$ the group $\Gamma^n=\mathrm{O}_+(\mathbb{Z}^{n,1})$ is
generated by refections in roots $\alpha\in\mathbb{Z}^{n,1}$ of norm $1$ or norm $2$.
Moreover the Coxeter diagram for $n=7$ is given by
\begin{center}
\psset{unit=1mm}
\begin{pspicture}*(-50,-8)(30,15)
\pscircle(-20,10){1}
\pscircle(-40,0){1}
\pscircle(-30,0){1}
\pscircle(-20,0){1}
\pscircle(-10,0){1}
\pscircle(0,0){1}
\pscircle(10,0){1}
\pscircle(20,0){1}

\psline(-20,9)(-20,1)
\psline(-39,0)(-31,0)
\psline(-29,0)(-21,0)
\psline(-19,0)(-11,0)
\psline(-9,0)(-1,0)
\psline(1,0)(9,0)
\psline(10.7,0.5)(19.3,0.5)
\psline(10.7,-0.5)(19.3,-0.5)
\psline(14.5,1.5)(16,0)
\psline(14.5,-1.5)(16,0)

\rput(-23,10){$0$}
\rput(-40,-4){$1$}
\rput(-30,-4){$2$}
\rput(-20,-4){$3$}
\rput(-10,-4){$4$}
\rput(0,-4){$5$}
\rput(10,-4){$6$}
\rput(20,-4){$7$}
\end{pspicture}
\end{center}
with simple roots 
\[ \alpha_0=e_0-e_1-e_2-e_3,\alpha_1=e_1-e_2,\cdots,\alpha_6=e_6-e_7,\alpha_7=e_7 \]
and for $n=13$ the Coxeter diagram takes the form
\begin{center}
\psset{unit=1mm}
\begin{pspicture}*(-42,-8)(82,15)
\pscircle(-20,10){1}
\pscircle(-40,0){1}
\pscircle(-30,0){1}
\pscircle(-20,0){1}
\pscircle(-10,0){1}
\pscircle(0,0){1}
\pscircle(10,0){1}
\pscircle(20,0){1}
\pscircle(30,0){1}
\pscircle(40,0){1}
\pscircle(50,0){1}
\pscircle(60,0){1}
\pscircle(70,0){1}
\pscircle(80,0){1}
\pscircle(60,10){1}

\psline(-20,9)(-20,1)
\psline(-39,0)(-31,0)
\psline(-29,0)(-21,0)
\psline(-19,0)(-11,0)
\psline(-9,0)(-1,0)
\psline(1,0)(9,0)
\psline(11,0)(19,0)
\psline(60,1)(60,9)
\psline(21,0)(29,0)
\psline(31,0)(39,0)
\psline(41,0)(49,0)
\psline(51,0)(59,0)
\psline(61,0)(69,0)
\psline(70.7,0.5)(79.3,0.5)
\psline(70.7,-0.5)(79.3,-0.5)
\psline(74.5,1.5)(76,0)
\psline(74.5,-1.5)(76,0)

\rput(-23,10){$0$}
\rput(-40,-4){$1$}
\rput(-30,-4){$2$}
\rput(-20,-4){$3$}
\rput(-10,-4){$4$}
\rput(0,-4){$5$}
\rput(10,-4){$6$}
\rput(20,-4){$7$}
\rput(30,-4){$8$}
\rput(40,-4){$9$}
\rput(50,-4){$10$}
\rput(60,-4){$11$}
\rput(70,-4){$12$}
\rput(80,-4){$13$}
\rput(64,10){$14$}
\end{pspicture}
\end{center}
with simple roots
\begin{eqnarray*}
&\alpha_0=e_0-e_1-e_2-e_3,\alpha_1=e_1-e_2,\cdots,\alpha_{11}=e_{11}-e_{12},\\
&\alpha_{12}=e_{12}-e_{13},\alpha_{13}=e_{13},\alpha_{14}=3e_0-e_1-\cdots-e_{11}. 
\end{eqnarray*}
\end{theorem}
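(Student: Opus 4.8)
I would prove this by Vinberg's algorithm: the plan is to exhibit the fundamental polyhedron of the reflection subgroup of $\Gamma^n$, to recognise that it has finitely many facets and finite volume, and to check that its Coxeter diagram has no symmetry. First observe the preliminary fact that if $\alpha\in\mathbb{Z}^{n,1}$ is primitive with $\alpha^2>0$, then $s_\alpha\in\mathrm{O}(\mathbb{Z}^{n,1})$ forces $\alpha^2\mid 2(x,\alpha)$ for all $x$, hence $\alpha^2\in\{1,2\}$ because $\mathbb{Z}^{n,1}$ is unimodular; conversely every reflection in such a root lies in $\Gamma^n$. Let $W^n\trianglelefteq\Gamma^n$ be the subgroup generated by all of them. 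Since $W^n$ is normal, $\Gamma^n=W^n\rtimes\mathrm{Stab}_{\Gamma^n}(P)$ for any fundamental polyhedron $P$ of $W^n$, and $\mathrm{Stab}_{\Gamma^n}(P)$ injects into the automorphism group of the Coxeter diagram of $P$; so it will be enough to produce a finite-sided, finite-volume $P$ whose diagram has no nontrivial automorphism, giving $\Gamma^n=W^n$.

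To build $P$ I would take $e_0$ as controlling point. Its stabiliser in $\Gamma^n$ is the group $\mathrm{O}(\mathbb{Z}^n)$ of signed permutations of $e_1,\dots,e_n$, the finite Coxeter group of type $B_n$; a fundamental chamber for it in $e_0^{\perp}$ is bounded by the hyperplanes of $\alpha_1=e_1-e_2,\dots,\alpha_{n-1}=e_{n-1}-e_n$ and $\alpha_n=e_n$, the roots $\alpha_{n-1},\alpha_n$ of norms $2$ and $1$ producing the double bond. These are the simple roots orthogonal to $e_0$. Vinberg's algorithm then adjoins, one at a time, primitive roots $\alpha$ with $(\alpha,e_0)<0$ that satisfy $(\alpha,\alpha_i)\le 0$ for every simple root $\alpha_i$ already found and have minimal value of $(\alpha,e_0)^2/\alpha^2$, a monotone function of the hyperbolic distance from $e_0$ to $\alpha^{\perp}$. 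Each round is a finite search, since for fixed $m\ge 1$ the roots with $(\alpha,e_0)=-m$ have spatial part of norm $m^2+1$ or $m^2+2$ in the positive-definite lattice $\langle e_1,\dots,e_n\rangle$. One stops once $P=\{x:(x,\alpha_i)\le 0\}$ has finite volume, which is detected combinatorially by Vinberg's criterion: $P$ must be the convex hull of its finitely many ordinary vertices (rank-$n$ elliptic subdiagrams) and ideal vertices (rank-$(n-1)$ affine subdiagrams).

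For $n=7$ the next root after the $B_7$-chamber is $\alpha_0=e_0-e_1-e_2-e_3$, of norm $2$ and distance parameter $\tfrac12$; one finds $(\alpha_0,\alpha_3)=-1$ and $(\alpha_0,\alpha_i)=0$ otherwise, which yields exactly the displayed diagram, and this diagram passes Vinberg's criterion, so the algorithm halts. Any automorphism of it must fix the endpoint $\alpha_7$ of the double bond, hence the whole $B_7$-tail, hence the branch vertex $\alpha_0$; so the automorphism group is trivial and $\Gamma^7=W^7$. For $n=13$ the algorithm first adjoins the same $\alpha_0$ and then, after checking that no admissible root occurs at the intermediate distance parameters $1,2,4$, the root $\alpha_{14}=3e_0-e_1-\cdots-e_{11}$, of norm $2$ and distance parameter $\tfrac92$, with $(\alpha_{14},\alpha_{11})=-1$ and orthogonal to the remaining simple roots; the resulting $15$-node diagram is the one displayed, it passes Vinberg's criterion, and again has no nontrivial automorphism (the double bond and the unequal branch lengths forbid it), so $\Gamma^{13}=W^{13}$. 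The remaining values $2\le n\le 19$ are treated in the same manner, producing Vinberg's full list of diagrams.

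The genuine obstacle is proving termination for every $n$ in this range, i.e. that $\mathbb{Z}^{n,1}$ is reflective. For $n\le 17$ a finite-volume polyhedron appears after a few rounds, but for $n=18$ and $n=19$ one must run the algorithm much further and, at each new distance parameter, eliminate the candidate roots by an extensive but finite computation; this is the part requiring the work of Vinberg together with Kaplinskaya, and the phenomenon really does break down, as no finite set of reflections cuts out a finite-volume polyhedron once $n\ge 20$.
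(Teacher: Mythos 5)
Your sketch is the standard Vinberg-algorithm argument, which is exactly the method of the cited source; the paper itself offers no proof of this theorem but simply attributes it to Vinberg (and Vinberg--Kaplinskaya for $n=18,19$). Your spot-checks for $n=7$ and $n=13$ (the norms, the inner products $(\alpha_0,\alpha_3)=(\alpha_{14},\alpha_{11})=-1$, the exclusion of roots at the intermediate distance parameters, and the triviality of the diagram automorphisms) are all correct, and you rightly identify that the only substantive remaining content is the termination/finite-volume verification carried out in the reference.
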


Let $\Gamma^n_1$ be the normal subgroup of $\Gamma^n$ generated by the reflections
in norm $1$ roots. Clearly $\Gamma^n_1$ is a subgroup of the principal congruence
subgroup $\Gamma^n(2)$ of level $2$, and in fact we have equality if and only if 
$n\leq7$ as shown by Everitt, Ratcliffe and Tschantz 
\cite{Everitt--Ratcliffe--Tschantz 2010} 
(see \cite{Heckman--Rieken 2014a} for a quick proof). 
Let $D$ be the closed fundamental chamber in the closure of $\mathbb{R}^{n,1}_+$ for 
$\Gamma^n$ containing the point $e_0-(\epsilon_1e_1+\cdots+\epsilon_ne_n)$ 
in $\mathbb{R}^{n,1}_+$ (for $\epsilon_1>\cdots>\epsilon_n>0$ all small), in 
accordance with the choice of positive roots in case $n=7,13$ in the above theorem, 
and let $G\supset D$ be the closed fundamental chamber for the Coxeter group $\Gamma^n_1$. 
By Coxeter group theory
\[  G=\cup_{\gamma\in\Gamma^n_0}\;\gamma D\]
with $\Gamma^n_0$ the subgroup of $\Gamma^n$ generated by the simple roots 
of norm $2$, and $\Gamma^n=\Gamma^n_1\rtimes\Gamma^n_0$. The local structure 
of $x$ in $G$ near $x_0e_0$ with $x_0>0$ is given by $x_p\leq0$ for $p=1,\cdots,n$.

The Fano plano is the projective plane $\mathbb{P}^2(2)$ over a field of $2$
elements. It has $7$ points, denoted $\{a,a_i,c_i\}$ with $i=1,2,3$, and $7$ lines,
given by the line $z$ through $\{c_1,c_2,c_3\}$, the three lines $\{b_i\}$
through $\{a,a_i,c_i\}$ and the remaining three lines $\{d_i\}$ through
$\{c_i,a_j,a_k\}$ with $\{i,j,k\}=\{1,2,3\}$ in accordance with the picture
below on the left. The incidence graph $\mathrm{I}_{14}$ is bipartite with
$7$ black nodes from the set of points $\mathcal{P}$ and $7$ white nodes from 
the set of lines $\mathcal{L}$. In its picture below on the right an ordinary
bond means incidence, while the thick bond between $a_i$ and $d_i$ indicates that
$a_i$ and $d_j$ are incident (and so connected) if and only if $i\neq j$
\cite{Simons 2001}.
\begin{center}
\psset{unit=1mm}
\begin{pspicture}*(-50,-20)(50,20)

\psdot[dotsize=2](-30,-5)\psdot[dotsize=2](-30,12.32)
\psdot[dotsize=2](-45,-13.66)\psdot[dotsize=2](-15,-13.66)
\psdot[dotsize=2](-30,-13.66)\psdot[dotsize=2](-37.5,-0.67)
\psdot[dotsize=2](-22.5,-0.67)

\psline(-30,12.32)(-45,-13.66)(-15,-13.66)(-30,12.32)(-30,-13.66)
\psline(-37.5,-0.67)(-15,-13.66)\psline(-22.5,-0.67)(-45,-13.66)

\rput(-28,-8.7){$a$}\rput(-30,15.3){$a_1$}\rput(-47,-17){$a_2$}\rput(-13,-17){$a_3$}
\rput(-30,-17){$c_1$}\rput(-19.2,0){$c_2$}\rput(-41,0){$c_3$}

\pscircle(-30,-5){8.66}

\psline(15,15)(15,0)
\psline(15,0)(15,-15)
\psline(15,0)(30,0)
\psline(30,15)(30,0)
\psline(30,0)(30,-15)
\psline[linewidth=1.1](15,-15)(30,-15)

\psdot[dotsize=2.5](15,15)
\psdot[dotsize=2.5](15,-15)
\psdot[dotsize=2.5,dotstyle=Bo](15,0)
\psdot[dotsize=2.5](30,0)
\psdot[dotsize=2.5,dotstyle=Bo](30,-15)
\psdot[dotsize=2.5,dotstyle=Bo](30,15)

\rput(11.5,15){$a$}
\rput(33.5,15){$z$}
\rput(11.5,-15){$a_i$}
\rput(33.5,-15){$d_i$}
\rput(11.5,0){$b_i$}
\rput(33.5,0){$c_i$}

\rput(42,7){$\mathrm{I}_{14}$}

\end{pspicture}
\end{center}
The group of diagram automorphism $\mathrm{PGL}_3(2)\cdot2$ of the Coxeter diagram  
$\mathrm{I}_{14}$ has the simple index $2$ subgroup
$\mathrm{PGL}_3(2)\cong\mathrm{PSL_2(7)}$ of order $168$ as colour preserving
automorphisms. The remaining group elements are involutions corresponding to 
projective dualities and interchange black and white nodes. 

\begin{theorem}\label{projective plane of order 2 theorem}
Fix a bijection $\mathcal{P}\rightarrow\{1,\cdots,7\}$ and so $e_p\in\mathbb{Z}^{7,1}$
for $p\in\mathcal{P}$. Let us write $e_l=e_0-\sum_{p\in l}e_p\in\mathbb{Z}^{7,1}$ for 
$l\in\mathcal{L}$, and therefore $(e_p,e_q)=\delta_{pq}$ for all $p,q\in\mathcal{P}$,
$(e_l,e_m)=2\delta_{lm}$ for all $l,m\in\mathcal{L}$, and $(e_p,e_l)=-1,0$ if 
$p\in l,p\notin l$ respectively. If we denote
\[ P=\{x\in\mathbb{R}^{7,1};(x,e_p)\leq0,(x,e_l)\leq0\;
\mathrm{for\;all}\; p\in\mathcal{P},l\in\mathcal{L}\} \]
then the induced hyperbolic polytope in $\mathbb{B}^7$ has finite volume
and we have the inclusions $D\subset P\subset G$.
\end{theorem}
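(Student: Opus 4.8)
The plan is to establish in turn the four parts of the theorem — the inner product formulas, the finite volume of $P$, the inclusion $D\subset P$, and the inclusion $P\subset G$ — the last being the only one requiring real work. The inner products are immediate from the incidence axioms of $\Pro^2(2)$: $(e_l,e_l)=(e_0,e_0)+\sum_{p\in l}(e_p,e_p)=-1+3=2$; two distinct lines $l,m$ meet in a single point $p$, so $(e_l,e_m)=(e_0,e_0)+(e_p,e_p)=0$; and $(e_p,e_l)=(e_p,e_0)-\sum_{q\in l}(e_p,e_q)$ is $-1$ if $p\in l$ and $0$ otherwise. Hence the dihedral angle of $P$ is $\pi/2$ along $H_{e_p}\cap H_{e_q}$ ($p\ne q$), along $H_{e_l}\cap H_{e_m}$ ($l\ne m$), and along $H_{e_p}\cap H_{e_l}$ when $p\notin l$, whereas it is $\pi/4$ when $p\in l$ (from $\cos\theta=-(e_p,e_l)/\sqrt{(e_p,e_p)(e_l,e_l)}=1/\sqrt2$); so $P$ is a Coxeter polytope whose Coxeter diagram is exactly $\mathrm{I}_{14}$. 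The finiteness of its volume will drop out at the end from $P\subset G$ and the quoted fact that $\Gamma^7_1=\Gamma^7(2)$ has finite covolume, giving $\mathrm{vol}(P)\le\mathrm{vol}(G)<\infty$; alternatively one applies Vinberg's finiteness criterion directly to $\mathrm{I}_{14}$.

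For $D\subset P$, since $D=\{x\in\overline{\R^{7,1}_+}:(x,\alpha_i)\le0,\ i=0,\dots,7\}$, it suffices to write each of the $14$ normals $e_p,e_l$ of $P$ as a non-negative linear combination of $\alpha_0,\dots,\alpha_7$; the corresponding inequalities on $D$ then hold term by term. From the simple roots of Theorem~\ref{Vinberg theorem} one has $e_p=\alpha_p+\alpha_{p+1}+\dots+\alpha_7$ for $p=1,\dots,7$ and $e_0=\alpha_0+\alpha_1+2\alpha_2+3(\alpha_3+\alpha_4+\alpha_5+\alpha_6+\alpha_7)$, so for any $\{a,b,c\}\subset\{1,\dots,7\}$ with $a<b<c$,
\[ e_0-e_a-e_b-e_c=\alpha_0+\sum_{j=1}^{7}\bigl(m_j-\#\{t\in\{a,b,c\}:t\le j\}\bigr)\alpha_j,\qquad (m_1,\dots,m_7)=(1,2,3,3,3,3,3). \]
As $a\ge1,\ b\ge2,\ c\ge3$, we get $m_j\ge\#\{t\in\{a,b,c\}:t\le j\}$ for every $j$, so all coefficients are $\ge0$; taking $\{a,b,c\}$ to be the point set of a line of the Fano plane shows $e_l\in\mathrm{cone}(\alpha_0,\dots,\alpha_7)$. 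Thus $D\subset P$ — and note this holds for any bijection $\mathcal P\to\{1,\dots,7\}$.

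For the decisive inclusion $P\subset G$, first observe that $S_7=\langle s_{\alpha_1},\dots,s_{\alpha_6}\rangle\subset\Gamma^7_0$ permutes $e_1,\dots,e_7$ while fixing $e_0$, so changing the bijection by an element of $S_7$ changes $P$ into an $S_7$-translate of itself; since $G=\bigcup_{\gamma\in\Gamma^7_0}\gamma D$ is $\Gamma^7_0$-stable, it is enough to treat one convenient bijection. Now $G$ is the fundamental chamber at $D$ of the Coxeter group $\Gamma^7_1$ generated by the reflections in the norm-$1$ roots of $\Z^{7,1}$, so $G=\{x:(x,\mu)\le0\text{ for every positive norm-}1\text{ root }\mu\}$, "positive" meaning $(x_D,\mu)<0$ for $x_D\in\mathrm{int}(D)$. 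By linear programming duality, $\mathrm{cone}(\{e_p:p\in\mathcal P\}\cup\{e_l:l\in\mathcal L\})$ is exactly $\{\mu:(x,\mu)\le0\text{ for all }x\in P\}$, so $P\subset G$ is equivalent to: every positive norm-$1$ root is a non-negative real combination of the $e_p$ and $e_l$. Writing $\mu=\mu_0e_0+\sum_p\mu_pe_p$ and $e_l=e_0-\sum_{p\in l}e_p$, such a combination exists exactly when there are $b_l\ge0$ with $\sum_l b_l=\mu_0$ and $\sum_{l\ni p}b_l\ge-\mu_p$ for all $p\in\mathcal P$. A positive norm-$1$ root has $\mu_0\ge0$; if $\mu_0=0$ then $\mu=e_p$ and there is nothing to prove, and if $\mu_0\ge1$ then $\sum_p\mu_p^2=\mu_0^2+1$ forces $|\mu_p|\le\mu_0$, whereupon the required distribution of total mass $\mu_0$ over the seven lines — covering each point $p$ to level $\ge-\mu_p$ — is produced by a short case analysis on $\mu_0$ and on the sign pattern of $(\mu_p)$, using the $(7,3,1)$-structure (any two points lie on a line, so one unit on one line covers three points at once).

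Equivalently one may work group-theoretically: the walls of $G$ through the finite vertex $[e_0]$ are exactly $H_{e_1},\dots,H_{e_7}$ (this is the local shape of $G$ near $x_0e_0$ recorded above), and the reflection group $W_P$ generated by the reflections in the walls of $P$ contains $\Gamma^7_1$ — indeed $s_{e_p},s_{e_l}\in W_P$ and $s_{e_l}s_{e_p}s_{e_l}=s_{s_{e_l}(e_p)}=s_{e_0-e_q-e_r}$ whenever $l=\{p,q,r\}$, and iterating such conjugations yields all simple reflections of $\Gamma^7_1$; since then $W_P\supseteq\Gamma^7_1$ and $P,G$ are the fundamental chambers of $W_P,\Gamma^7_1$ containing $\mathrm{int}(D)$, $P\subset G$ results. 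In either approach the main obstacle is the same: one must control all positive norm-$1$ roots (equivalently, all walls of $G$, not merely the finitely many visible near $[e_0]$), and this is where the $\mathrm{PGL}_3(2)$-symmetry of the fourteen hyperplanes, the shape of the $\Gamma^7_0=W(E_7)$-orbit of $e_7$, and Vinberg's description of the norm-$1$ reflection group of $\Z^{7,1}$ come into play. The remaining assertions are, by comparison, routine.
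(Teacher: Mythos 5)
Your framework is sound and genuinely different from the paper's, but the decisive step --- the inclusion $P\subset G$ --- is not actually proved in either of your two variants; it is reduced to a claim which is then asserted. In variant (a) you correctly observe, via cone duality, that $P\subset G$ is equivalent to every positive norm-$1$ root lying in $\mathrm{cone}(\{e_p\}\cup\{e_l\})$, and you translate this into a fractional covering problem on the Fano plane. But as you set it up the coefficient $\mu_0$ ranges over all positive integers (there are infinitely many norm-$1$ roots), so ``a short case analysis on $\mu_0$ and on the sign pattern of $(\mu_p)$'' is not a finite procedure and no inductive mechanism is offered; note also that the mass count $3\mu_0$ versus $\sum_p(-\mu_p)\le\sqrt{7(\mu_0^2+1)}$ only bounds total demand, not the Hall-type feasibility you actually need. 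In variant (b) the identity $s_{e_l}s_{e_p}s_{e_l}=s_{e_0-e_q-e_r}$ for $l=\{p,q,r\}$ is correct and puts $28$ of the $56$ simple reflections of $\Gamma^7_1$ into $W_P$, but ``iterating such conjugations yields all simple reflections'' is exactly the remaining half of the work and is left unverified. You candidly flag that this is ``where the main obstacle is,'' which confirms the gap.

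The fix is short once you use the explicit list of the $56$ walls of $G$ (recorded in the paper): $e_p$, $e_0-e_p-e_q$, $2e_0-e_1-\cdots-e_7+e_p+e_q$, $3e_0-e_1-\cdots-e_7-e_p$. Your dual-cone criterion needs only these, and each is a visibly nonnegative combination: $e_0-e_p-e_q=e_l+e_r$ with $l=\{p,q,r\}$; $2e_0-\sum_se_s+e_p+e_q=e_{l_1}+e_{l_2}+e_t$ where $l_1,l_2$ are the two lines through neither $p$ nor $q$ and $t=l_1\cap l_2$; and $3e_0-\sum_se_s-e_p=\sum_{l\ni p}e_l+e_p$. (The corresponding conjugations by $s_{e_{l_1}}$, $s_{e_{l_2}}$ complete variant (b) as well.) For comparison, the paper proceeds quite differently: it enumerates all actual and ideal vertices of $P$ via the maximal elliptic and parabolic subdiagrams of $\mathrm{I}_{14}$, evaluates the $56$ simple roots of $G$ on each vertex, and concludes by convexity. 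Your dual approach, once completed as above, is arguably shorter and avoids the vertex enumeration altogether; what the paper's route buys is the explicit vertex list, which it reuses for the tiling count of $G$ by copies of $P$ and as the template for the much heavier order-$3$ case.
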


The proof of this result is a rather straightforward calculation given in the
next section. The interest of this theorem is its analogy with the theorem below,
which was conjectured in \cite{Heckman 2013} and provides a positive step
towards the monstrous proposal of Daniel Allcock \cite{Allcock 2009}.

Now, let us consider the projective plane $\mathbb{P}^2(3)$ over a field of $3$
elements, and denote as before by $\mathcal{P}$ and $\mathcal{L}$ the sets of its
$13$ points and $13$ lines respectively. Its incidence graph $\mathrm{I}_{26}$
with $26$ nodes indexed by $\mathcal{P}\sqcup\mathcal{L}$ has a picture given 
below with the same convention on the meaning of the thick bonds and the index
$i\in\{1,2,3\}$ as before \cite{Conway--Simons 2001}. Its automorphism group 
is the group $\mathrm{PGL}_3(3)\cdot2$ of order $11232=2^5\cdot3^3\cdot13$ consisting of 
projective transformations and projective dualities.
\begin{center}
\psset{unit=1mm}
\begin{pspicture}*(-40,-30)(50,45)

\psline(-30,0)(-30,40)
\psline(30,0)(30,40)
\psline(-30,40)(30,40)
\psline(-30,0)(-10,20)
\psline(-30,0)(-10,0)
\psline(-30,0)(-10,-20)
\psline(30,0)(10,20)
\psline(30,0)(10,0)
\psline(30,0)(10,-20)
\psline(-10,-20)(10,-20)
\psline[linewidth=1.1](-10,20)(10,20)
\psline[linewidth=1.1](-10,0)(10,-20)
\psline[linewidth=1.1](-10,-20)(10,0)
\psline(-10,20)(10,0)
\psline(-10,0)(10,20)

\psdot[dotsize=2.5,dotstyle=Bo](-30,0)
\psdot[dotsize=2.5](-10,0)
\psdot[dotsize=2.5,dotstyle=Bo](10,0)
\psdot[dotsize=2.5](30,0)
\psdot[dotsize=2.5](-10,20)
\psdot[dotsize=2.5,dotstyle=Bo](10,20)
\psdot[dotsize=2.5](-10,-20)
\psdot[dotsize=2.5,dotstyle=Bo](10,-20)
\psdot[dotsize=2.5](-30,40)
\psdot[dotsize=2.5,dotstyle=Bo](30,40)

\rput(-35,40){$a$}
\rput(35,40){$f$}
\rput(-35,0){$b_i$}
\rput(35,0){$e_i$}
\rput(-14,24){$a_i$}
\rput(14,24){$f_i$}
\rput(-14,-24){$c_i$}
\rput(14,-24){$d_i$}
\rput(-14,-4){$g_i$}
\rput(14,-4){$z_i$}
\rput(45,15){$\mathrm{I}_{26}$}

\end{pspicture}
\end{center}
The next result is the exact analogue of the previous theorem for the projective
plane of order $3$ rather than $2$.

\begin{theorem}\label{projective plane of order 3 theorem}
Fix a bijection $\mathcal{P}\rightarrow\{1,\cdots,13\}$ and write 
$e_l=e_0-\sum_{p\in l}e_p$ for $l\in\mathcal{L}$, and therefore
$(e_p,e_q)=\delta_{pq}$ for all $p,q\in\mathcal{P}$, $(e_l,e_m)=3\delta_{lm}$ 
for all $l,m\in\mathcal{L}$, and $(e_p,e_l)=-1,0$ if $p\in l,p\notin l$ respectively.
If we denote
\[ P=\{x\in\mathbb{R}^{13,1};(x,e_p)\leq0,(x,e_l)\leq0\;
\mathrm{for\;all}\; p\in\mathcal{P},l\in\mathcal{L}\} \]
then the induced hyperbolic polytope in $\mathbb{B}^{13}$ has finite volume
and we have the inclusion $P\subset G$.
\end{theorem}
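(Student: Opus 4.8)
The plan is to reproduce, for the plane of order $3$, the two ingredients of the proof of Theorem~\ref{projective plane of order 2 theorem}: a combinatorial reading of $P$ off the incidence structure of $\mathbb{P}^2(3)$, followed by separate treatments of the volume and of the inclusion $P\subset G$. The first step is routine. From $e_l=e_0-\sum_{p\in l}e_p$, the fact that every line of $\mathbb{P}^2(3)$ carries four points, and that two distinct points lie on a unique line, one reads off $(e_p,e_q)=\delta_{pq}$, $(e_l,e_m)=3\delta_{lm}$ and $(e_p,e_l)\in\{-1,0\}$, so the walls of $P$ are the nodes of $\mathrm{I}_{26}$ and every dihedral angle of $P$ is either $\pi/2$ or $\arccos(1/\sqrt3)$ (the latter since $\det\left(\begin{smallmatrix}1&-1\\-1&3\end{smallmatrix}\right)=2>0$); in particular $P$ is acute-angled. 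One also notes that $b=e_0-\sum_p\epsilon_pe_p$ lies in the interior of $P$ for $\epsilon$ small, and, being in the interior of $D$, also in the interior of $G$.

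For the finite-volume claim I would describe the set $\overline P\cap\partial\mathbb{B}^{13}$ of ideal points of $P$. Writing such a point as a normalized isotropic vector with non-positive coordinates $-y_q\le0$ and $\sum_qy_q^2=1$, membership in $\overline P$ amounts to $\sum_{p\in l}y_p\le1$ for every line $l$; combined with $\sum y_q^2=1$ and elementary inequalities this forces the support of $y$ to be an arc of $\mathbb{P}^2(3)$, and since $\mathbb{P}^2(3)$ has no $5$-arc the only possibilities are the thirteen cusps $e_0-e_p$ and, for each $4$-arc $\{p_1,\dots,p_4\}$, the point $2e_0-e_{p_1}-\dots-e_{p_4}$ — a finite list, each entry being a genuine vertex of $\overline P$. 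At each such ideal vertex one checks that the images, in the positive-definite quotient of its orthogonal complement, of the facet normals through it positively span that quotient: for $e_0-e_p$ this uses that every point is collinear with $p$, and for a $4$-arc vertex that (again from the absence of a $5$-arc) every point off the arc lies on one of its six secants. Hence each ideal vertex contributes a finite-volume cusp and $P$ has finite volume.

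The inclusion $P\subset G$ is where the two lattices interact. Since $b$ lies in the interior of both $P$ and $G$ and $G$ is the $\Gamma^{13}_1$-chamber through $b$, one has $P\subset G$ if and only if no mirror of a norm-$1$ reflection meets the interior of $P$; orienting each norm-$1$ root $\alpha$ so that $(b,\alpha)\le0$, this means $(x,\alpha)\le0$ for all $x\in P$, which by Farkas' lemma (the cone $P$ has nonempty interior and its facet normals span $\mathbb{R}^{14}$) is equivalent to $\alpha\in\mathrm{cone}_{\ge0}(\{e_p\}_{p\in\mathcal P}\cup\{e_l\}_{l\in\mathcal L})$. Writing $\alpha=a_0e_0+\sum_pa_pe_p$ with $a_0\ge0$ and $\sum_pa_p^2=1+a_0^2$, such a nonnegative decomposition exists precisely when there is a nonnegative weighting $(d_l)$ of the lines with $\sum_ld_l=a_0$ and $\sum_{l\ni p}d_l\ge-a_p$ for every $p$ with $a_p<0$. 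I would build it from the pencil of the four lines through a point $p$ at which $a_p$ is minimal: that pencil already covers every other point once, which settles every root with a single dominant negative coordinate; the remaining roots have all coordinates of modulus at most $1$, so $1+a_0^2=\sum a_p^2\le13$ and $a_0\le3$, leaving only finitely many point-configurations of size $\le10$, which are dispatched by short line-covering arguments in $\mathbb{P}^2(3)$ — once more using that any five points lie on two lines, together with covering bounds for complements of small sets.

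The step I expect to be the real obstacle is this last reduction. A priori one must control an infinite family of norm-$1$ roots, and the work is to show that the roots which genuinely bound $P$ have controlled $e_0$-coefficient — those with all coordinates small forcing $a_0\le3$, those with a large coordinate being absorbed by a single pencil of lines — so that the residual check is finite. Granted that, both $P\subset G$ and the finite-volume statement reduce to the linear algebra of the $26\times26$ Gram matrix and the incidence combinatorics of $\mathbb{P}^2(3)$ recorded in $\mathrm{I}_{26}$, exactly as in the order-$2$ case.
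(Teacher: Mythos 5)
Your polar-cone reformulation of $P\subset G$ (via biduality of finitely generated cones for the Lorentzian form) is correct, and the overall architecture is sensible, but both halves of the argument contain genuine gaps. The first is in the enumeration of ideal points. The claim that $\sum_q y_q^2=1$ together with $\sum_{p\in l}y_p\le 1$ for all lines forces the support of $y$ to be an arc is false: take $y_q=1/3$ for the nine points $q$ not on a fixed line $l$ and $y_q=0$ on $l$. Then $\sum_q y_q^2=1$, and any line $m\ne l$ meets the support in exactly three points, so $\sum_{p\in m}y_p=1$; this is the isotropic vector $3e_0-\sum_{p\notin l}e_p$, which lies in $P$. These thirteen cusps $u_l$ (of type $4\mathrm{D}_4$), together with the cusps $u_{klmn}$ attached to quadrilaterals of lines in general position, appear in the paper's Lemma \ref{vertices of P over 3 lemma}; your list retains only the ``point-like'' half of the cusp set, so the finite-volume argument as written is broken. (The paper instead classifies the maximal parabolic subdiagrams of $\mathrm{I}_{26}$, of types $3\mathrm{A}_5$ and $4\mathrm{D}_4$ and rank $12$, and applies the Vinberg criterion, there being no Lann\'er subdiagrams.)

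The second gap is the one you yourself flag: the reduction of $P\subset G$ to fractional line covers is left essentially open. The dichotomy you propose --- roots with a single dominant negative coordinate, absorbed by the pencil through that point, versus roots with all coordinates of modulus at most $1$, forcing $a_0\le 3$ --- is not exhaustive. For instance $5e_0-2(e_1+\cdots+e_6)-e_7-e_8$ is a norm-$1$ root with six coordinates equal to $-2$: the pencil through one of these points covers the other five only with weight $a_0/4$ each, which is insufficient, and the root is not of the small-coordinate type either; such intermediate shapes persist for all large $a_0$, so an infinite verification remains. The paper avoids this by arguing in the opposite direction: it enumerates the finitely many actual and ideal vertices of $P$ (up to $S_{13}$ and projective duality, via the elliptic and parabolic subdiagrams of $\mathrm{I}_{26}$ of maximal rank), then pushes each vertex into $D$ by an explicit chain of reflections in the norm-$2$ simple roots $s_0,\dots,s_{12}$, so that every vertex lies in $\bigcup_{w\in W(\mathrm{E}_{13})}wD\subset G$ and $P\subset G$ follows by convexity. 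If you wish to keep your dual approach, you need a uniform covering lemma valid for \emph{all} norm-$1$ roots in the forward orientation, and proving that is essentially equivalent to the vertex computation you were trying to bypass.
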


Again the proof is by straightforward but more extensive (however not unpleasant) calculations.
The inclusion $P\subset G$ was conjectured in \cite{Heckman 2013} and in the final section 
of this paper we discuss its relevance towards the monstrous proposal of Daniel Allcock 
\cite{Allcock 2009}. 

\section{Proof of Theorem{\;\ref{projective plane of order 2 theorem}}}

In this section we consider the case $n=7$ and $\mathrm{I}_{14}$ is the incidence
graph of $\mathbb{P}^2(2)$ with black nodes from the set of points $\mathcal{P}$
and white nodes from the set of lines $\mathcal{L}$. The extremals of the fundamental 
chamber $D$ for the Coxeter group $\Gamma^7$ are spanned over $\mathbb{R}_+$ by 
\[ v_0=e_0,v_1=e_0-e_1,v_2=2e_0-e_1-e_2,v_p=3e_0-e_1-\cdots-e_p \]
for $p=3,\cdots,7$ as antidual basis of the basis of simple roots in
Theorem{\;\ref{Vinberg theorem}}. 

The group $\Gamma^7_0$ is the Weyl group $W(\mathrm{E}_7)$ as stabilizer of the vector 
$v_7$. Note that the fundamental chamber $G$ for the Coxeter group $\Gamma^7_1$
is bounded by $56$ (being $|W(\mathrm{E}_7)|/|W(\mathrm{E}_6)|$) 
walls corresponding to the positive norm $1$ roots
\[ e_p,e_0-e_p-e_q,2e_0-e_1-\cdots-e_7+e_p+e_q,3e_0-e_1-\cdots-e_7-e_p \]
for $p,q\in\mathcal{P}$ with $p\neq q$, making all together $7+\binom{7}{2}+\binom{7}{2}+7=56$ 
roots as should. This description is well known from the description of the
$56$ lines on a degree two del Pezzo surface \cite{Dolgachev--Ortland}. 
All these simple roots have the same inner product with the
vector $v_7$, which for that reason is called a Weyl vector for the chamber $G$.

The convex cone 
\[ P=\{x\in\mathbb{R}^{7,1};(x,e_p)\leq0,(x,e_l)\leq0\;
\mathrm{for\;all}\; p\in\mathcal{P},l\in\mathcal{L}\} \] has dihedral angles 
$\pi/4$ or $\pi/2$ and the reflections in the walls of $P$ generate a Coxeter
group with Coxeter diagram $\mathrm{I}_{14}$, but with all edges marked with $4$.
Its connected parabolic subdiagrams are of type $\mathrm{A}_3$, and
each such is contained in a parabolic subdiagram of type $3{\mathrm{A}}_3$
(in our picture of the Coxeter diagram $\mathrm{I}_{14}$ we left out the marks
$4$ on the bonds, and here we continue this convention, so $\mathrm{A}_3$
actually stands for $\tilde{\mathrm{C}}_2$ and this is indeed parabolic).
Since the Coxeter diagram has no Lann\'{e}r subdiagrams we conclude that
$P$ has finite hyperbolic volume by the Vinberg criterion \cite{Heckman CG 2013}.
Since the inclusion $D\subset P$ is trivial by looking around $v_{\mathcal{P}}$
it remains to show that $P\subset G$. Denote 
\[ v_{\mathcal{P}}=v_0,v_{\mathcal{L}}=v_7/\sqrt{2} \]
for two norm $-1$ vectors in $\mathbb{R}^{7,1}_+$, given as the intersection
of the mirrors perpendicular to the norm $1$ roots $e_p$ for $p\in\mathcal{P}$
and the intersection of the mirrors for the norm $2$ roots $e_l=e_0-\sum_{p\in l}e_p$
for $l\in\mathcal{L}$ respectively. 

\begin{lemma}\label{vertices of P over 2 lemma}
The actual vertices of the hyperbolic polytope obtained from $P$ are represented by
the $2$ vectors $v_{\mathcal{P}},v_{\mathcal{L}}$ together with 
the $56$ vectors
\[ v_{p,l}=2e_0-\sum_{q\notin l\sqcup\{p\},}e_q,
   v_{l,p}=(3e_0-2e_p-\sum_{q\in l}e_q)/\sqrt{2}  \]
for all $p\in\mathcal{P},l\in\mathcal{L}$ with $p\notin l$.   
The ideal vertices are represented by the $14$ vectors
\[ u_p=e_0-e_p,u_l=(2e_0-\sum_{p\notin l}e_p)/\sqrt{2} \]
for all $p\in\mathcal{P},l\in\mathcal{L}$.
\end{lemma}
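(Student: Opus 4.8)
The plan is to identify all faces of the polyhedral cone $P$ by a systematic enumeration of the rays in which the bounding hyperplanes meet, then check which of these rays actually lie in the closure of $\mathbb{R}^{7,1}_+$ (i.e.\ have non‑positive norm) and which are genuine extremal rays of $P$ rather than being cut off by a further inequality. First I would set up the two ``symmetric'' vertices: the point $v_{\mathcal P}=v_0=e_0$ lies on all seven mirrors $(x,e_p)=0$ for $p\in\mathcal P$, has norm $-1$, and is clearly in $P$ since $(e_0,e_l)=-1<0$; dually, intersecting the thirteen\,--\,wait, the seven mirrors $(x,e_l)=0$ for $l\in\mathcal L$ forces $x$ proportional to $v_7$ (the Weyl vector for $G$, fixed by $W(\mathrm E_7)=\Gamma^7_0$), and normalising to norm $-1$ gives $v_{\mathcal L}=v_7/\sqrt2$; one checks $(v_{\mathcal L},e_p)=-3/(2\sqrt2)<0$ for all $p$, so $v_{\mathcal L}\in P$. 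These account for the two ``interior'' vertices.

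Next I would use the large symmetry group $\mathrm{PGL}_3(2)\cdot 2$ acting on $\mathrm I_{14}$ to cut down the work: the $56$ walls split under $\mathrm{PGL}_3(2)$ into orbits, and a vertex is determined by the subset of walls through it, so it suffices to analyse one representative of each orbit type and then spread it around by the group. The claim is that, besides $v_{\mathcal P},v_{\mathcal L}$, the extremal rays come in two $56$‑element families $v_{p,l}$ and $v_{l,p}$ indexed by non‑incident (point, line) pairs, plus two $7$‑element families of ideal (norm $0$) vertices $u_p,u_l$. For each proposed vector I would verify three things: (i) it has the right norm ($-2$ for the $v$'s after the stated normalisation — note $v_{p,l}^2 = 4(-1)+ (7-3)=0$? here one must be careful and recompute, $v_{p,l}=2e_0-\sum_{q\notin l\cup\{p\}}e_q$ has $|\{q\notin l\cup\{p\}\}|=7-3-1=3$, so $v_{p,l}^2=-4+3=-1$, genuinely hyperbolic; similarly $u_p^2=0$, $u_l^2 = (4-6)/2 = -1$? again $|\{p\notin l\}|=7-3=4$, so $u_l^2=(-4+4)/2=0$, ideal, good); (ii) all $56$ inner products $(v,e_p)$ and $(v,e_l)$ are $\le 0$, which by symmetry reduces to a handful of cases keyed on incidence of the indexing data with $l$ and $p$; and (iii) the set of walls on which $v$ lies (those inner products that vanish) has rank $7$ in the $v_{p,l},v_{l,p}$ cases and rank $6$ (with a parabolic, i.e.\ degenerate‑at‑infinity, subdiagram $\tilde{\mathrm C}_2$ or a union) in the $u_p,u_l$ cases, so that $v$ is indeed $0$‑dimensional in $\mathbb B^7$.

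Having produced these vertices, the final step is to prove there are no others: every extremal ray of $P$ must be the unique solution of some rank‑$7$ subsystem of the $56$ defining equalities that additionally satisfies all $56$ inequalities, and one shows by the orbit analysis that any such admissible subsystem is $\Gamma^7_0$‑conjugate to one already listed. Equivalently, and more cleanly, I would invoke the Vinberg finiteness already established (the Coxeter diagram $\mathrm I_{14}$ with all marks $4$ has only the parabolic subdiagrams of type $\mathrm A_3$/$3\mathrm A_3$ and no Lann\'er subdiagrams), so the polytope has exactly the vertices read off from the maximal elliptic subdiagrams (finite vertices) and the maximal‑rank parabolic subdiagrams (ideal vertices) of $\mathrm I_{14}$; enumerating the elliptic subdiagrams of rank $7$ in $\mathrm I_{14}$ and matching each to a non‑incident pair $(p,l)$ is then a finite combinatorial check, and likewise the parabolic ones of rank $6$ match the $14$ nodes $p\in\mathcal P$, $l\in\mathcal L$. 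The main obstacle I anticipate is precisely this last matching: one must show that each rank‑$7$ elliptic subdiagram of $\mathrm I_{14}$ (with its $4$‑marked edges) corresponds to exactly one $(p,l)$ with $p\notin l$ and yields the stated vector — this is where the Fano‑plane combinatorics (the fact that $p\notin l$ determines the complementary structure $l\cup\{p\}$ of size $4$, hence a specific co‑triangle of black nodes) does the real work, and getting the normalisation factors $1$ versus $1/\sqrt2$ consistent between the two ``colours'' requires a little care. Everything else is a direct inner‑product computation.
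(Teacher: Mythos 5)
Your proposal follows essentially the same route as the paper: identify the actual vertices with the rank-$7$ elliptic subdiagrams of $\mathrm{I}_{14}$ (the all-black or all-white $7\mathrm{A}_1$ giving $v_{\mathcal P},v_{\mathcal L}$, and $\mathrm{A}_1\sqcup3\mathrm{A}_2$ giving the $v_{p,l}$ and, via projective duality, the $v_{l,p}$) and the ideal vertices with the maximal parabolic subdiagrams of type $3\mathrm{A}_3$, then pin down the explicit representing vectors by direct inner-product computation, with Vinberg's criterion supplying finiteness of volume. The only blemishes are bookkeeping slips --- $P$ has $14$ walls, not $56$ (the number $56$ counts the walls of $G$, or the vertices $v_{p,l},v_{l,p}$), and $(v_{\mathcal L},e_p)=-1/\sqrt2$ rather than $-3/(2\sqrt2)$ --- neither of which affects the argument.
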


\begin{proof}
Since the hyperbolic polytope obtained from $P$ is a finite volume convex
acute angled polytope its actual vertices are given by the elliptic subdiagrams
of its Coxeter diagram of maximal rank $7$. A connected elliptic subdiagram
of the diagram $\mathrm{I}_{14}$ with all edges marked with the number $4$
(however, in all figures above and below this mark $4$ will be deleted)
is of type $\mathrm{A}_1$ or $\mathrm{A}_2$. For example, if you leave out
from $\mathrm{I}_{14}$ the black node $a$ then the maximal subdiagram obtained
by also deleting the white nodes $b_i$ connected with $a$ has the left form
\begin{center}
\psset{unit=1mm}
\begin{pspicture}*(-70,-20)(50,30)

\psline(-55,-5)(-40,20)
\psline(-55,-5)(-35,-12)
\psline(-55,-5)(-25,0)
\psline(-40,20)(-35,-12)
\psline(-40,20)(-25,0)
\psline(-35,-12)(-25,0)

\psdot[dotsize=2,dotstyle=Bo](-55,-5)\psdot[dotsize=2,dotstyle=Bo](-40,20)
\psdot[dotsize=2,dotstyle=Bo](-35,-12)\psdot[dotsize=2,dotstyle=Bo](-25,0)

\psdot[dotsize=2](-47.5,7.5)\psdot[dotsize=2](-45,-8.5)
\psdot[dotsize=2](-40,-2.5)\psdot[dotsize=2](-37.5,4)
\psdot[dotsize=2](-32.5,10)\psdot[dotsize=2](-30,-6)

\rput(-41,22.6){$z$}\rput(-58,-6){$d_1$}\rput(-35,-15){$d_2$}\rput(-21,0){$d_3$}
\rput(-51,8){$c_1$}\rput(-40.5,3.5){$c_2$}\rput(-29,10){$c_3$}
\rput(-28,-8){$a_1$}\rput(-40,-5.5){$a_2$}\rput(-47,-11){$a_3$}

\rput(-55,20){$\mathrm{T}_{10}$}
\rput(0,20){$\tilde{\mathrm{A}}_7$}

\psline(10,15)(30,15)\psline(30,15)(30,-5)
\psline(10,-5)(30,-5)\psline(10,-5)(10,15)

\psdot[dotsize=2,dotstyle=Bo](10,15)\psdot[dotsize=2,dotstyle=Bo](30,15)
\psdot[dotsize=2,dotstyle=Bo](10,-5)\psdot[dotsize=2,dotstyle=Bo](30,-5)

\psdot[dotsize=2](10,5)\psdot[dotsize=2](30,5)
\psdot[dotsize=2](20,15)\psdot[dotsize=2](20,-5)

\rput(10,19){$z$}\rput(10,-9){$d_1$}
\rput(30,19){$d_2$}\rput(30,-9){$d_3$}
\rput(6,5){$c_1$}\rput(20,19){$c_2$}
\rput(34,5){$a_1$}\rput(20,-9){$a_2$}

\end{pspicture}
\end{center} 
and so is a tetrahedron with white nodes at the $4$ vertices and black nodes
at the midpoints of $6$ egdes. We denote this Coxeter diagram by $T_{10}$ and 
by straightforward inspection it has only one elliptic subdiagram of rank $6$, 
namely of type $6\mathrm{A}_1$ and consisting of the $6$ black nodes. 
Hence we recover the subdiagram of type $7\mathrm{A}_1$ in $\mathrm{I}_{14}$ 
consisting of the $7$ black nodes corresponding to the vertex $v_{\mathcal{P}}$ 
of $P$. Similarly the subdiagram of type $7\mathrm{A}_1$ in $\mathrm{I}_{14}$ 
consisting of the $7$ white nodes corresponds to the vertex $v_{\mathcal{L}}$. 

The other possibility is that we start with a subdiagram of type $\mathrm{A}_2$ 
of $\mathrm{I}_{14}$, for example the subdiagram with nodes $\{a,b_3\}$. 
Leaving out the connecting nodes $\{b_1,b_2,a_3,c_3\}$
gives us a diagram of type $\tilde{\mathrm{A}}_7$ (also called a free octagon)
and drawn on the right. Any elliptic subdiagram of rank $5$ of this free octagon 
is of type $\mathrm{A}_1\sqcup2\mathrm{A}_2$, and so we get an elliptic subdiagram
of $\mathrm{I}_{14}$ of type $\mathrm{A}_1\sqcup3\mathrm{A}_2$. These give the 
$28$ vertices $v_{p,l}$ as given above with $p\in\mathcal{P},l\in\mathcal{L},p\notin l$. 
A projective duality interchanges $e_0=v_{\mathcal{P}}$ with 
$v_{\mathcal{L}}=v_7/\sqrt{2}$, and the $e_p$ with the vectors
$e_l/\sqrt{2}=(e_0-\sum_{q\in l}e_q)/\sqrt{2}$ for $p\in\mathcal{P},l\in\mathcal{L}$. 
By direct calculation the vertices $v_{p,l}$ give by projective duality the
$28$ vertices $v_{l,p}$ as given above.

The ideal vertices of $P$ correspond to the maximal parabolic subdiagrams of type 
$3\mathrm{A}_3$ of rank $6$, and the $14$ vertices $u_p$ and $u_l$ for
$p\in\mathcal{P},l\in\mathcal{L}$ follow by a direct computation.
\end{proof}

\begin{remark}
The vector $v_{\mathcal{P}}=v_0=e_0$ is a vertex of all three hyperbolic polytopes
associated with $D,P,G$ and the local structure near this vertex of $P$ and $G$
coincides and is equal to $\cup_{\gamma}\gamma D$ by letting $\gamma$ run over the 
symmetric group $S_7=W(\mathrm{A}_6)$ as stabilizer of the edge of $D$ from $v_0$ to $v_7$.
There are $7$ edges of $P$ and $G$ from $v_0$ to the $7$ ideal vertices $u_p=e_0-e_p$.
There are $\binom{7}{2}$ faces of dimension $2$ of $P$ and $G$ containing $v_0$,
which are right angled triangles at $v_0$ with $2$ ideal vertices $u_p,u_q$ for 
$p,q\in\mathcal{P}$ distinct. There are $\binom{7}{3}$ faces of dimension $3$ of $G$ 
containing $v_0$, which are double tetrahedra with $3$ ideal vertices $u_p,u_q,u_r$ 
for $p,q,r\in\mathcal{P}$ distinct and just one more actual vertex $2e_0-e_p-e_q-e_r$
obtained from $v_0$ by reflection in the norm 2 root $e_0-e_p-e_q-e_r$
\cite{Heckman--Rieken 2014a}. Apparently only those for which $p,q,r$ are not
collinear give $7\cdot6\cdot4/3!=28$ faces of dimension $3$ for $P$ leading to
the vertices $v_{p,l}$ as in the lemma. The remaining vertices of $P$ are
obtained by projective duality.
\end{remark}

The proof of the inclusion $P\subset G$ as stated in 
Theorem{\;\ref{projective plane of order 2 theorem}} follows now by direct inspection. The 
vertices $v_{\mathcal{P}}$, $v_{p,l}$ and $u_p$ for $p\in\mathcal{P},l\in\mathcal{L},p\notin l$
are also vertices of $G$. By direct inspection the four types of simple roots 
\[ e_p,e_0-e_p-e_q,2e_0-e_1-\cdots-e_7+e_p+e_q,3e_0-e_1-\cdots-e_7-e_p \]
for the Gosset chamber $G$ take values from 
\[ \{-2,-1,0\},\{-3,-2,-1,0\},\{-4,-3,-2,-1\},\{-4,-3,-2\} \]
on the actual vertices $\sqrt{2}v_{l,p}=3e_0-2e_p-\sum_{q\in l}e_q$ of $P$ respectively, 
and values from
\[ \{-1,0\},\{-2,-1,0\},\{-2,-1,0\},\{-2,-1\} \]
on the ideal vertices $\sqrt{2}u_l$ of $P$ respectively.
Finally these simple roots take the single value $-1$ on the Weyl vector $v_{\mathcal{L}}$
of $G$. Since all these numbers are $\leq0$ we conclude that all the vertices of $P$ are 
contained in $G$, which by the Minkowski convex hull theorem implies $P\subset G$.

The hyperbolic polytope $G$ has $576=2^6\cdot3^2$ (being $|W(\mathrm{E}_7)|/|W(\mathrm{A}_6)|$)
actual vertices, which equals $2^7(1+28/8)$ as should. Indeed, the hyperbolic polytope
$G$ has a tiling by $2^7$ congruent copies of $P$ meeting all at $v_{\mathcal{L}}$, while
just of them one contains $v_{\mathcal{P}}$, and $2^3$ of them meet at each $v_{p,l}$.

\section{An odd presentation for $W(\mathrm{E}_7)$}

The Weyl group $W(\mathrm{E}_6)$ has a remarkable presentation due to Christopher Simons
\cite{Simons 2005} as factor group of the hyperbolic Coxeter group $W({\mathrm{P}_{10}})$ 
modulo deflation of the free hexagons. Here $\mathrm{P}_{10}$ is our notation for
the Petersen graph. This presentation of $W(\mathrm{E}_6)$ was given a geometric explanation 
\cite{Heckman--Rieken 2014a} using the Allcock--Carlson--Toledo period map for the 
moduli space of cubic surfaces \cite{Allcock--Carlson--Toledo 2002} and its reality 
analysis by Yoshida \cite{Yoshida 2001}. In fact, Yoshida only discussed the maximal real
component of real cubic surfaces with $27$ real lines, which is also the component we used.
The complete reality analysis for the $5$ components with $27,15,7,3,3$ real lines
respectively was worked out in \cite{Allcock--Carlson--Toledo 2010}. 

In this section we tell a similar story for $W(\mathrm{E}_7)$ using the Kondo period
map for the moduli space of quartic curves \cite{Kondo 2000} and its reality analysis by
Heckman and Rieken \cite{Rieken 2015},\cite{Heckman--Rieken 2015}. Again only the
maximal real component of real quartic curves with $4$ components or equivalently with
$28$ real bitangents is needed for this purpose.

\begin{theorem}
If $\mathrm{T}_{10}$ is the tetrahedral Coxeter diagram 
(but this time bonds do have mark $3$ rather than $4$, and so are deleted as usual), 
then the Weyl group $W(\mathrm{E}_7)$ has a presentation as factor group of the 
hyperbolic Coxeter group $W(\mathrm{T}_{10})$ modulo deflation of the free octagons.
\end{theorem}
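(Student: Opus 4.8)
The plan is to mimic the geometric strategy used for the $W(\mathrm{E}_6)$ presentation in \cite{Heckman--Rieken 2014a}, transporting it through the inclusion $D\subset P\subset G$ from Theorem{\;\ref{projective plane of order 2 theorem}} and the decomposition $\Gamma^7=\Gamma^7_1\rtimes\Gamma^7_0$ with $\Gamma^7_0=W(\mathrm{E}_7)$. First I would set up the Coxeter group $W(\mathrm{T}_{10})$ abstractly: $\mathrm{T}_{10}$ is the tetrahedral diagram with $4$ ``vertex'' nodes and $6$ ``edge-midpoint'' nodes, all bonds marked $3$. The claim is that $W(\mathrm{E}_7)$ is obtained from $W(\mathrm{T}_{10})$ by adjoining the relations that deflate each free octagon, i.e.\ for every sub-octagon $\tilde{\mathrm{A}}_7$ of $\mathrm{T}_{10}$ (these are the $\tilde{\mathrm{A}}_7$ subdiagrams appearing in Lemma{\;\ref{vertices of P over 2 lemma}}) one imposes that the product of alternate generators around the octagon has order $2$ instead of infinite order. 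So there is a canonical surjection $\pi\colon W(\mathrm{T}_{10})\twoheadrightarrow W(\mathrm{E}_7)$ once I exhibit generators of $W(\mathrm{E}_7)$ satisfying the braid relations of $\mathrm{T}_{10}$ together with the octagon-deflation relations; the content is that $\ker\pi$ is generated as a normal subgroup by exactly those deflation relations.

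The key geometric step is to realize $\mathrm{T}_{10}$ inside the hyperbolic picture. Recall from the proof of Lemma{\;\ref{vertices of P over 2 lemma}} that deleting the black node $a$ from $\mathrm{I}_{14}$ (and the white nodes $b_i$ it meets) produces precisely the Coxeter diagram $\mathrm{T}_{10}$, a tetrahedron with white nodes at the $4$ vertices and black nodes at the $6$ edge-midpoints, where the marks are $4$ because we are inside the $\mathrm{I}_{14}$-polytope $P$. To get marks $3$ I would instead work with the full $\Gamma^7$-chamber $D$: the walls of $D$ adjacent along the relevant faces meet at angle $\pi/3$, reflecting the norm-$2$ roots, and the stabilizer of the vertex $v_7$ (equivalently the vector $v_{\mathcal{L}}$) is exactly $\Gamma^7_0=W(\mathrm{E}_7)$. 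Concretely I would choose the ten reflections in $W(\mathrm{E}_7)$ corresponding to the ten nodes of $\mathrm{T}_{10}$ — the four in the norm-$2$ roots $e_l$ for the four lines $l$ through a fixed point, say, read off from the del Pezzo/bitangent combinatorics — and check directly that distinct ones braid with exponent $3$ exactly according to the tetrahedral incidence, giving a homomorphism $W(\mathrm{T}_{10})\to W(\mathrm{E}_7)$ which is surjective since these reflections already generate $W(\mathrm{E}_7)$ (one verifies the $\mathrm{E}_7$ diagram is a subdiagram, or that the reflections act irreducibly on the $7$-dimensional root space).

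It then remains to identify the kernel. Here I would invoke the reality analysis of the Kondo period map for quartic curves \cite{Kondo 2000},\cite{Rieken 2015},\cite{Heckman--Rieken 2015}: on the maximal real component (quartics with $4$ real ovals, equivalently $28$ real bitangents) the monodromy / Deligne--Mostow orbifold fundamental group is generated by the reflections attached to the $\mathrm{T}_{10}$ nodes, and the only relations beyond the Coxeter braid relations of $\mathrm{T}_{10}$ come from the codimension-one boundary strata of the moduli space, which are in bijection with the free octagons and contribute exactly the deflation relations. Equivalently, and this is the route I would actually write out, one can argue combinatorially: let $N\trianglelefteq W(\mathrm{T}_{10})$ be the normal closure of the octagon-deflation elements; then $W(\mathrm{T}_{10})/N$ is a Coxeter-type quotient whose natural chamber is the union $\cup_{\gamma\in W(\mathrm{E}_7)}\gamma D$ cut out near $v_7$, and counting chambers (using $|W(\mathrm{E}_7)|=2^{10}\cdot3^4\cdot5\cdot7$ and the octagon count) shows $|W(\mathrm{T}_{10})/N|=|W(\mathrm{E}_7)|$, forcing $N=\ker\pi$.

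The main obstacle I expect is precisely this last identification — proving that $N$ already contains \emph{all} relations, i.e.\ that no further relations are needed beyond the octagon deflations. For $W(\mathrm{E}_6)$ and the Petersen graph this was the delicate point in \cite{Heckman--Rieken 2014a}, handled via the geometry of the moduli space of cubic surfaces; the analogous input here is that the relevant real locus of quartic curves is connected and simply connected as an orbifold away from the octagon boundary divisors, so that the Van Kampen / Deligne--Mostow presentation has no hidden generators or relations. A purely group-theoretic alternative — run a coset enumeration of $N$ in $W(\mathrm{T}_{10})$ and match orders — would also close the argument but is less illuminating; I would present the geometric proof and relegate the verification of orders to a remark.
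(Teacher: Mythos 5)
Your construction of the surjection $W(\mathrm{T}_{10})\twoheadrightarrow W(\mathrm{E}_7)$ is in the right spirit, although the realization is off in detail: the chamber $D$ for $\Gamma^7$ has only $8$ walls, so you cannot read the ten $\mathrm{T}_{10}$ reflections directly off $D$, and in the Fano plane there are three (not four) lines through a point. What is actually needed, and what the paper does, is to write down ten explicit vectors in the rank-$7$ root lattice $R(\mathrm{E}_7)$ indexed by the nodes of $\mathrm{T}_{10}$: the seven simple roots $\alpha_1,\dots,\alpha_7$ completed by three further lattice vectors $\alpha_0,\alpha_8,\alpha_9$ (one of them minus the highest root, so that $0,1,\dots,7$ form an $\tilde{\mathrm{E}}_7$ subdiagram of $\mathrm{T}_{10}$), and to check that their pairwise inner products reproduce the tetrahedral incidence. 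Since these ten vectors live in a rank-$7$ lattice they satisfy linear relations, and these are exactly what make the deflation relations hold downstairs.

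The genuine gap is in your treatment of the kernel. The three routes you offer are not arguments: the claim that the real locus of quartic curves is ``simply connected as an orbifold away from the octagon boundary divisors'' is precisely the kind of statement that would itself require proof and is nowhere established; the chamber count of $|W(\mathrm{T}_{10})/N|$ presupposes knowing the quotient, which is what is to be shown; and coset enumeration is a verification, not a proof strategy you have carried out. You have missed the elementary mechanism that closes the argument: the three free octagons of $\mathrm{T}_{10}$ (obtained by deleting the midpoints of the three pairs of opposite edges) give three deflation relations, each a length-$8$ word set equal to $1$, and from these one solves successively for $s_9$, then $s_8$, then $s_0$ as words in $s_1,\dots,s_7$. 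Hence the quotient $Q=W(\mathrm{T}_{10})/N$ is generated by the images of $s_1,\dots,s_7$, which satisfy the $\mathrm{E}_7$ Coxeter relations, so $Q$ is a quotient of $W(\mathrm{E}_7)$; since $Q$ also surjects onto $W(\mathrm{E}_7)$ compatibly with these generators, $Q\cong W(\mathrm{E}_7)$. No moduli-space input is needed for the proof; the Kondo period map only provides the geometric interpretation afterwards.
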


\begin{proof}
Consider the Coxeter diagram
\begin{center}
\psset{unit=1mm}
\begin{pspicture}*(-60,-7)(20,13)
\pscircle(-20,10){1}
\pscircle(-50,0){1}
\pscircle(-40,0){1}
\pscircle(-30,0){1}
\pscircle(-20,0){1}
\pscircle(-10,0){1}
\pscircle(0,0){1}
\pscircle(10,0){1}

\psline(-20,9)(-20,1)
\psline(-49,0)(-41,0)
\psline(-39,0)(-31,0)
\psline(-29,0)(-21,0)
\psline(-19,0)(-11,0)
\psline(-9,0)(-1,0)
\psline(1,0)(9,0)

\rput(-23,10){$7$}
\rput(-50,-4){$0$}
\rput(-40,-4){$1$}
\rput(-30,-4){$2$}
\rput(-20,-4){$3$}
\rput(-10,-4){$4$}
\rput(0,-4){$5$}
\rput(10,-4){$6$}
\end{pspicture}
\end{center}
of type $\tilde{\mathrm{E}}_7$ as subdiagram of $\mathrm{T}_{10}$ via the numeration
\begin{center}
\psset{unit=1mm}
\begin{pspicture}*(-70,-17 )(30,22)

\psline(-55,-5)(-40,20)
\psline(-55,-5)(-35,-12)
\psline(-55,-5)(-25,0)
\psline(-40,20)(-35,-12)
\psline(-40,20)(-25,0)
\psline(-35,-12)(-25,0)

\psdot[dotsize=2,dotstyle=Bo](-55,-5)\psdot[dotsize=2,dotstyle=Bo](-40,20)
\psdot[dotsize=2,dotstyle=Bo](-35,-12)\psdot[dotsize=2,dotstyle=Bo](-25,0)

\psdot[dotsize=2,dotstyle=Bo](-47.5,7.5)\psdot[dotsize=2,dotstyle=Bo](-45,-8.5)
\psdot[dotsize=2,dotstyle=Bo](-40,-2.5)\psdot[dotsize=2,dotstyle=Bo](-37.5,4)
\psdot[dotsize=2,dotstyle=Bo](-32.5,10)\psdot[dotsize=2,dotstyle=Bo](-30,-6)

\rput(-43,20){$8$}\rput(-58,-6){$1$}\rput(-35,-15){$3$}\rput(-21,0){$5$}
\rput(-51,8){$0$}\rput(-40.5,3.5){$7$}\rput(-29,10){$6$}
\rput(-28,-8){$4$}\rput(-40,-5.5){$9$}\rput(-47,-11){$2$}

\rput(5,17){$0\mapsto c_1,1\mapsto d_1$}
\rput(5,10){$2\mapsto a_3,3\mapsto d_2$}
\rput(5,3){$4\mapsto a_1,5\mapsto d_3$}
\rput(5,-4){$6\mapsto c_3,7\mapsto c_2$}
\rput(5,-11){$8\mapsto z,9\mapsto a_2$}
\end{pspicture}
\end{center} 
Let $\alpha_1,\cdots,\alpha_7$ be the simple roots in $R_+(\mathrm{E}_7)$ and put 
\begin{eqnarray*}
\alpha_0 &=& -(2\alpha_1+3\alpha_2+4\alpha_3+3\alpha_4+2\alpha_5+\alpha_6+2\alpha_7) \\
\alpha_8   &=& \alpha_1+2\alpha_2+3\alpha_3+2\alpha_4+\alpha_5+2\alpha_7 \\
\alpha_9 &=& \alpha_2+2\alpha_3+2\alpha_4+2\alpha_5+\alpha_6+\alpha_7 
\end{eqnarray*}
as vectors in $R(\mathrm{E}_7)$. Then we have $(\alpha_x,\alpha_y)=0$ for any two 
disconnected nodes $x,y$ of $\mathrm{T}_{10}$, while $(\alpha_x,\alpha_y)=-1$  for any 
two connected nodes $x,y$ of $\mathrm{T}_{10}$, except for $\{x,y\}=\{7,8\},\{5,9\}$ 
in which case this value is $1$. Hence the group $W(\mathrm{T}_{10})$ modulo deflation 
of the free octagons admits an epimorphsm onto $W(\mathrm{E}_7)$. 

There are three free hexagons, obtained by leaving out the midpoints 
$\{0,4\}$,$\{2,6\}$,$\{7,9\}$ of pairs of opposite edges, and their deflations amount
to the relations
\[ s_1s_2s_3s_7s_8s_6s_5s_9=1,s_1s_9s_5s_4s_3s_7s_8s_0=1,s_1s_2s_3s_4s_5s_6s_8s_0=1 \]
respectively. We claim that the generators $s_8,s_9,s_0$ are superfluous.
Indeed $s_9$ is a word in $s_1,\cdots,s_7$ using the second and the third relation.
By the first relation this implies that $s_8$ is also a word in $s_1,\cdots,s_7$.
Finally by the second relation we conclude that $s_0$ is also a word in $s_1,\cdots,s_7$.
This proves that the Coxeter group $W(\mathrm{T}_{10})$ modulo deflation of the 
free octagons is indeed isomorphic to $W(\mathrm{E}_7)$.
\end{proof}

The Coxeter diagram $\mathrm{T}_{10}$  has the group $S_4$ as group of diagram
automorphisms, and the deflation relations are also preserved. In turns out that this
$S_4$ is a subgroup of $W(\mathrm{E}_7)$ acting by inner automorphism on the generators.
The root system $R(\mathrm{E}_7)$ has a subroot system of type $6R(\mathrm{A}_1)$,
which is unique up to conjugation by $W(\mathrm{E}_7)$ and is normalized by the
simple group $\mathrm{PGL}_3(2)$ of order $168$. This realizes the semidirect product
$2^6\rtimes\mathrm{PGL}_3(2)$ as a maximal subgroup of $W(\mathrm{E}_7)$, and this
monomorphism is unique up to conjugation. The group $S_4$ is a maximal subgroup of
$\mathrm{PGL}_3(2)$, unique up to (inner and outer) automorphisms of $\mathrm{PGL}_3(2)$.
In turn this realizes $S_4$ as subgroup of $W(\mathrm{E}_7)$ in a up to conjugation almost 
unique way, the difference between the $2$ monomorphisms is coming from the center $\pm1$
of $W(\mathrm{E}_7)$. This monomorphism $S_4\hookrightarrow W(\mathrm{E}_7)$ acts 
on the odd presentation of $W(\mathrm{E}_7)$ as group of diagram automorphisms. 

Consider the extended tetrahedral Coxeter diagram $\mathrm{T}_{13}$ with $4$ white nodes
corresponding to norm $2$ vectors and $9$ black nodes corresponding to norm $1$ vectors in
$\mathbb{Z}^{6,1}$. The ordinary branches mean that the inner product is $-\sqrt{2}$ so the 
the dihedral angle between the $2$ mirrors is $\pi/4$, while the dashes branches mean that
the inner product is $-1$ so the $2$ mirrors are parallel. But for simplicity we leave out
the marks $4$ and $\infty$ from the branches as before. It is obtained from the tetrahedral
diagram $\mathrm{T}_{10}$ in the proof of Lemma{\;\ref{vertices of P over 2 lemma}} by adding
$3$ black nodes $\{b_i'\}$ dashed connected to the midpoints $\{a_i,c_i\}$ of pairs of
opposite edges of the tetrahedron, while the $3$ nodes $\{b_i'\}$ are also dashed connected
among each other. From this description it is clear that the Coxeter diagram 
$\mathrm{T}_{13}$ has $S_4$ as group of diagram automorphisms.

This diagram has parabolic subdiagrams of type $\tilde{\mathrm{A}}_1\sqcup2
\tilde{\mathrm{C}}_2$ of rank $5$ and no Lann\'{e}r subdiagrams. 
Hence the corresponding Coxeter group $W(\mathrm{T}_{13})$ has cofinite volume 
on hyperbolic space $\mathbb{B}^6$ by the Vinberg criterion. Its fundamental chamber 
$P'$ is just the wall of the fundamental chamber $P$ corresponding to the incidence 
Coxeter diagram $\mathrm{I}_{14}$ of the Fano plane as described in the introduction.
Indeed the roots with index $b_i'$ are obtained from those by $b_i$ in the Coxeter 
diagram $\mathrm{I}_{14}$ by orthogonal projection on the orthogonal complement of the root 
with index $a$. 

\begin{center}
\psset{unit=1mm}
\begin{pspicture}*(-80,-30)(0,25)

\psline(-55,-5)(-40,20)
\psline(-55,-5)(-35,-12)
\psline(-55,-5)(-25,0)
\psline(-40,20)(-35,-12)
\psline(-40,20)(-25,0)
\psline(-35,-12)(-25,0)

\psline[linestyle=dashed](-47.5,7.5)(-65,7.5)\psline[linestyle=dashed](-65,7.5)(-65,-20)
\psline[linestyle=dashed](-65,-20)(-30,-20)\psline[linestyle=dashed](-30,-20)(-30,-6)
\psline[linestyle=dashed](-45,-8.5)(-45,-25)\psline[linestyle=dashed](-45,-25)(-15,-25)
\psline[linestyle=dashed](-15,-25)(-15,10)\psline[linestyle=dashed](-15,10)(-32.5,10)
\psline[linestyle=dashed](-37.5,4)(-43,4)\psline[linestyle=dashed](-43,4)(-40,-2.5)

\psline[linestyle=dashed](-65,-20)(-65,-30)
\psline[linestyle=dashed](-65,-30)(-15,-30)
\psline[linestyle=dashed](-15,-30)(-15,-25)
\psline[linestyle=dashed](-15,-25)(-43,4)
\psline[linestyle=dashed](-65,-20)(-43,4)

\psdot[dotsize=2,dotstyle=Bo](-55,-5)\psdot[dotsize=2,dotstyle=Bo](-40,20)
\psdot[dotsize=2,dotstyle=Bo](-35,-12)\psdot[dotsize=2,dotstyle=Bo](-25,0)

\psdot[dotsize=2](-47.5,7.5)\psdot[dotsize=2](-45,-8.5)
\psdot[dotsize=2](-40,-2.5)\psdot[dotsize=2](-37.5,4)
\psdot[dotsize=2](-32.5,10)\psdot[dotsize=2](-30,-6)

\psdot[dotsize=2](-65,-20)\psdot[dotsize=2](-15,-25)\psdot[dotsize=2](-43,4)

\rput(-43,21){$z$}\rput(-58,-6){$d_1$}\rput(-35,-15.5){$d_2$}\rput(-21,0){$d_3$}
\rput(-50,10){$c_1$}\rput(-33.5,3.5){$c_2$}\rput(-30,13){$c_3$}
\rput(-26.5,-7){$a_1$}\rput(-40,-5.5){$a_2$}\rput(-48,-11){$a_3$}
\rput(-69,-20){$b_1'$}\rput(-11,-25){$b_3'$}\rput(-42,7){$b_2'$}

\rput(-60,20){$\mathrm{T}_{13}$}

\end{pspicture}
\end{center} 

The hyperbolic polytope in $\mathbb{B}^6$ associated with $P'$ has symmetry group $S_4$
and the interior points of its quotient by $S_4$ corresponds to the moduli space of maximal 
real quartic curves, that is real smooth quartic curves with $4$ connected components.
For an analysis of the real algebraic geometry of the Kondo period map \cite{Kondo 2000}
we refer to Section 5.7 of the PhD thesis by one of us \cite{Rieken 2015}. The group $S_4$ 
acts as symmetry group of the $4$ connected components of the maximal real quartic curve, 
and hence we find an up to conjugation unique monomorphism 
$S_4\hookrightarrow W(\mathrm{E}_7)/\{\pm1\}$ in the symmetry group 
of the $28$ bitangent of the quartic curve.  The conclusion is that the Weyl groups
$W(\mathrm{E}_6)$ and $W(\mathrm{E}_7)$ have analogous odd presentations as factor groups 
of the hyperbolic Coxeter groups $W(\mathrm{P}_{10})$ and $W(\mathrm{T}_{10})$ modulo
deflation of the free hexagons and octagons respectively, and these presentations have
natural geometric meaning coming from the action of these Weyl groups on the moduli spaces
of marked maximally real del Pezzo surfaces of degree $3$ and $2$ respectively.

\section{Proof of Theorem{\;\ref{projective plane of order 3 theorem}}}

In this section let $n=13$ and let $\mathrm{I}_{26}$ be the incidence graph of
the projective plane $\mathbb{P}^2(3)$ over a field of $3$ elements.
Clearly the chamber $G$ is invariant under the symmetric group $S_{13}$ generated by
the simple roots $s_1,\cdots,s_{12}$ in the notation of Theorem{\;\ref{Vinberg theorem}}.
The connected elliptic subdiagrams (relative to the hyperbolic metric) of $\mathrm{I}_{26}$
are of type $\mathrm{A}_k$ for $k=1,2,3,4$, while the connected parabolic subdiagrams 
are of type $\mathrm{A}_5$ and $\mathrm{D}_4$ of rank $4$ and $3$ respectively.
In order to arrive at the analogue of Lemma{\;\ref{vertices of P over 2 lemma}}
we have the following combinatorial lemma.

\begin{lemma}
The elliptic subdiagrams of $\mathrm{I}_{26}$ of maximal rank $13$
are of type $13\mathrm{A}_1$ or of type
\[ 4\mathrm{A}_1\sqcup3\mathrm{A}_3, \mathrm{A}_1\sqcup4\mathrm{A}_3,
2\mathrm{A}_1\sqcup\mathrm{A}_2\sqcup3\mathrm{A}_3, \mathrm{A}_1\sqcup3\mathrm{A}_4,
\mathrm{A}_2\sqcup\mathrm{A}_3\sqcup2\mathrm{A}_4, 3\mathrm{A}_3\sqcup\mathrm{A}_4 \] 
while the parabolic subdiagrams of maximal rank $12$ are of type $3\mathrm{A}_5$ or 
$4\mathrm{D}_4$. Any two elliptic subdiagrams of $\mathrm{I}_{26}$ of maximal rank 
$13$ and of the same type are conjugated under the automorphism group $\mathrm{PGL}_3(3)\cdot2$.
\end{lemma}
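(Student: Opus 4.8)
The plan is to reduce the classification to a finite combinatorial search organized by the incidence structure of $\mathbb{P}^2(3)$, and then to exploit the transitivity properties of $\mathrm{PGL}_3(3)\cdot2$ on configurations of points and lines. First I would record the local structure of $\mathrm{I}_{26}$: each point node $e_p$ is joined (by a mark-$4$ bond) to the $4$ line nodes $e_l$ with $p\in l$, each line node to the $4$ point nodes it contains, with the thick-bond convention encoding the ``$i\neq j$'' incidences. Since the bonds all carry mark $4$, a connected subdiagram is elliptic exactly when it is of type $\mathrm{A}_k$ as an abstract graph with $k\le4$ (a path), is parabolic ($\tilde{\mathrm{A}}_k$ or $\tilde{\mathrm{D}}_k$, i.e. $\mathrm{A}_5=\tilde{\mathrm{C}}_2\ldots$ in the paper's renaming) in the boundary cases, and is Lannér-free — this last fact is what the text already asserts and what makes the Vinberg criterion applicable. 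So a maximal-rank elliptic subdiagram is a disjoint union of paths (of lengths $1,2,3,4$, i.e. $\mathrm{A}_1,\mathrm{A}_2,\mathrm{A}_3,\mathrm{A}_4$) on all $26$ nodes, using $13$ edges, with no path extendable and with the induced subgraph containing no parabolic piece.

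The key structural observation to carry the enumeration is that every edge of $\mathrm{I}_{26}$ is an incident point-line pair $(p,l)$, and a subgraph which is a disjoint union of paths is precisely a subset of the flags (incident pairs) such that no point lies in two chosen lines of the subgraph and no line contains two chosen points of the subgraph — i.e. a \emph{partial matching between $\mathcal{P}$ and $\mathcal{L}$ along incidence}, whose connected components are alternating paths. Thus I would set up the problem as: choose a set of alternating paths in the bipartite incidence graph covering all $26$ vertices with exactly $13$ edges, each path of length $\le 4$, maximal (no two endpoints of distinct paths joined by an unused incident flag), and avoiding an induced $\tilde{\mathrm{A}}_5$ (a $6$-cycle) or $\tilde{\mathrm{D}}_4$. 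Counting edges by type, if there are $a_1,a_2,a_3,a_4$ components of type $\mathrm{A}_1,\dots,\mathrm{A}_4$ then $a_1+2a_2+3a_3+4a_4=13$ and $2a_1+3a_2+4a_3+5a_4=26$, hence $a_2=0$ is \emph{forced} unless an $\mathrm{A}_2$ appears paired against the arithmetic — subtracting twice the first from the second gives $a_2=2a_1+2a_3+3a_4-13$... wait, one gets $a_2 = 13 - 2a_1 - 2a_3 - 3a_4$, so small nonnegative solutions are exactly the six types listed plus $13\mathrm{A}_1$. I would then verify by a short case analysis (using point–line duality to halve the work) that each arithmetically-admissible type is actually realizable as a \emph{maximal} Lannér-free configuration, and that the non-maximal or parabolic-containing ones are excluded; the parabolic maximal-rank subdiagrams $3\mathrm{A}_5$ and $4\mathrm{D}_4$ arise analogously from partitioning the flags into three $6$-cycles (a ``resolution'' of the incidence structure) or four copies of $\tilde{\mathrm{D}}_4$ around the four lines through a point, and I would identify these with the classical parallelism/affine structures on $\mathrm{PG}(2,3)$.

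For the final transitivity claim I would argue type by type. Each elliptic type corresponds to a combinatorial configuration in $\mathbb{P}^2(3)$ — e.g. $13\mathrm{A}_1$ is a perfect matching of $\mathcal{P}$ with $\mathcal{L}$ by incidence (a \emph{polarity-like} bijection), $\mathrm{A}_1\sqcup4\mathrm{A}_3$ is a choice of four incident point-line-point-line chains plus one leftover flag, etc. — and I would invoke the well-known fact that $\mathrm{PGL}_3(3)$ is transitive on ordered triples of non-collinear points, on flags, on pairs of distinct points, and (together with the duality) on the relevant configuration types, so that any two configurations of the same combinatorial type lie in one orbit. Concretely, for each type I would pick a ``path component of largest length'' as an anchor: a maximal path is an alternating sequence of points and lines, equivalently a small sub-configuration whose stabilizer computation is standard; transitivity on the anchor reduces to transitivity of the anchor's stabilizer on the completing data, which in the small $3$-element geometry can be checked directly. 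I expect the main obstacle to be precisely this last bookkeeping: ensuring that ``same type'' genuinely forces the same $\mathrm{PGL}_3(3)\cdot2$-orbit rather than splitting into two or more orbits — a priori two configurations of type, say, $\mathrm{A}_2\sqcup\mathrm{A}_3\sqcup2\mathrm{A}_4$ could be inequivalent. I would handle this by an orbit-counting (Burnside) cross-check: compute $|\mathrm{PGL}_3(3)\cdot2| = 11232$ divided by the stabilizer order of one representative of each type, sum over types, and confirm the total matches the independently-enumerated number of maximal-rank elliptic subdiagrams; agreement certifies single-orbitness. The realizability and maximality verifications are routine but somewhat lengthy, exactly as the paper warns ``more extensive (however not unpleasant) calculations.''
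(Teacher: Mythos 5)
Your setup misidentifies the combinatorial object being classified, and this breaks the argument at its core. A rank-$13$ elliptic subdiagram of $\mathrm{I}_{26}$ is an induced subgraph on exactly $13$ of the $26$ nodes whose connected components are paths on at most $4$ nodes (an $\mathrm{A}_k$ component has $k$ nodes and contributes rank $k$); it is not a spanning configuration on all $26$ nodes with $13$ edges. Consequently your reformulation as ``a partial matching between $\mathcal{P}$ and $\mathcal{L}$ along incidence'' is false --- already a path $p$--$l$--$p'$ has a line containing two chosen points --- and your description of the type $13\mathrm{A}_1$ as a perfect matching is also false: it is an independent set of $13$ nodes, which the paper shows must be all of $\mathcal{P}$ or all of $\mathcal{L}$. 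Your second linear equation has no basis (it would force $13$ components), and as you notice mid-computation the arithmetic does not work; the only valid relation of this kind is $a_1+2a_2+3a_3+4a_4=13$, and it admits many solutions that do \emph{not} occur as subdiagrams, e.g.\ $9\mathrm{A}_1\sqcup\mathrm{A}_4$, $2\mathrm{A}_2\sqcup3\mathrm{A}_3$, $\mathrm{A}_1\sqcup6\mathrm{A}_2$. Excluding these is the entire content of the lemma and cannot be extracted from any node count.

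The missing idea is the structural reduction the paper actually uses: fix a component of largest type, delete it together with all nodes adjacent to it, and identify the remaining induced diagram explicitly. Deleting an $\mathrm{A}_4$ leaves a free dodecagon $\tilde{\mathrm{A}}_{11}$, in which choosing $9$ nodes forming paths of $\le4$ nodes amounts to a partition of $12$ into three parts $\le5$, giving exactly the three types containing an $\mathrm{A}_4$; deleting an $\mathrm{A}_3$ (resp.\ an $\mathrm{A}_2$) leaves the diagrams the paper calls $\mathrm{I}_{15}$ (resp.\ $\mathrm{I}_{18}$), which are analysed in the same way, and a separate incidence count disposes of the pure $\mathrm{A}_1$ case. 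Your proposal defers all of this to ``a short case analysis,'' which is precisely where the proof lives. On the transitivity claim your plan (orbit representatives plus a Burnside-type count cross-checked against an independent enumeration) is legitimate in principle --- indeed more explicit than the paper's parenthetical assertions --- but it presupposes the enumeration of subdiagrams of each type that the first, broken, part of your argument was supposed to deliver.
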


\begin{proof}
It is easy to check that all elliptic subdiagrams of rank $13$ consisting of
only type $\mathrm{A}_1$ factors are those consisting of all black or all white nodes.
Indeed, if there is $1$ white (line) node, then there are at most $9=13-4$ black (point) 
nodes, and so together at most $10$ nodes. If there are $2$ white nodes, then there are 
at most $6=13-7$ black nodes, and so together at most $8$ nodes. If there are $3$ white 
nodes, then there are at most $4=13-9$ black nodes, and so together at most $7$ nodes.
If there are $4$ white nodes, then there are at most $3$ black nodes, and so together
at most $7$ nodes. Hence in all these cases we never arrive at $13$ nodes, and so the 
only cases left of type $13\mathrm{A}_1$ are all black nodes or all white nodes. 

Suppose on the other extreme that we have an elliptic subdiagram of maximal 
rank $13$ with at least one component of type $\mathrm{A}_4$ (any two of these 
are conjugate under the group $\mathrm{PGL_3(3)}$ of colour preserving 
automorphisms of $\mathrm{I}_{26}$). The Coxeter diagram obtained by 
deleting this $\mathrm{A}_4$ together with all nodes connected to this 
$\mathrm{A}_4$ and all bonds connected to these nodes is a free dodecagon 
of type $\tilde{\mathrm{A}}_{11}$. Hence we have to look for an elliptic 
subdiagram of $\tilde{\mathrm{A}}_{11}$ of maximal rank $9$. 
This amounts to finding partitions $n_1+n_2+n_3$ of $12$ into three parts 
$n_i\leq5$, which are just the three possibilities
\[ \{n_i\}=\{2,5,5\},\{3,4,5\},\{4,4,4\} \]
correspondinig to the above types. This leads to the three types of elliptic
subdiagrams of $\mathrm{I}_{26}$ of maximal rank $13$ with at least one component
of type $\mathrm{A}_4$ as given in the lemma.

Now suppose we have an elliptic subdiagram of $\mathrm{I}_{26}$ of maximal rank $13$ 
with at least one factor of type $\mathrm{A}_3$ and the other components of type 
$\mathrm{A}_k$ with $k\leq3$. 
The maximal Coxeter diagram of $\mathrm{I}_{26}$ disconnected from this $\mathrm{A}_3$ 
is a free dodecagon (with nodes numbered $1,\cdots,12$) together with three more nodes, 
each node attached to an opposite pair of odd numbered nodes of the free dodecagon. 
Let us denote this diagram by $\mathrm{I}_{15}$.

The $\mathrm{I}_{15}$ diagram has two essentially different subdiagrams of type
$\mathrm{A}_3$, whose maximal disconnected complement is either a free octagon
of type $\tilde{\mathrm{A}}_7$ or is of type $\tilde{\mathrm{D}}_8$. The diagram
$\tilde{\mathrm{A}}_7$ has no elliptic subdiagrams of rank $7$, but the diagram
$\tilde{\mathrm{D}}_8$ has three elliptic subdiagrams of rank $7$ with only components
of type $\mathrm{A}_k$ for $k\leq3$, namely of type $4\mathrm{A}_1\sqcup\mathrm{A}_3$,
$\mathrm{A}_1\sqcup2\mathrm{A}_3$ and $2\mathrm{A}_1\sqcup\mathrm{A}_2\sqcup\mathrm{A}_3$.

The diagram of type $\mathrm{I}_{15}$ has essentially one subdiagram of type $\mathrm{A}_2$, 
whose maximal disconnected complement is just a free octagon of type $\tilde{\mathrm{A}}_7$
together with two more nodes, one attached to a node of the free octagon and the other
attached to the opposite node of the free octagon. This diagram has just one elliptic 
subdiagram of rank $8$ with only components of type $\mathrm{A}_k$ for $k\leq3$, 
namely the one of type $2\mathrm{A}_1\sqcup2\mathrm{A}_3$.

Since $\mathrm{I}_{15}$ has no elliptic subdiagram of type $10\mathrm{A}_1$ we find that
the $\mathrm{I}_{26}$ diagram has just the three types of elliptic subdiagrams of rank $13$
with at least one component of type $\mathrm{A}_3$ and the others of type $\mathrm{A}_k$
with $k\leq 3$ as given in the lemma.

Now suppose we have an elliptic subdiagram of maximal rank $13$ with at least one factor 
of type $\mathrm{A}_2$. Any two subdiagrams of type $\mathrm{A}_2$ are conjugate under 
the group $\mathrm{PGL_3(3)}$, so we may consider the one with nodes $\{a,f\}$. 
The maximal disconnected subdiagram in $\mathrm{I}_{26}$ has the shape 
\begin{center}
\psset{unit=1mm}
\begin{pspicture}*(-30,-17)(30,17)

\psline(5,10)(15,0)\psline(-5,-10)(5,-10)\psline(-15,0)(-5,10)
\psline[linewidth=1](-5,10)(5,10)
\psline[linewidth=1](-15,0)(-5,-10)
\psline[linewidth=1](15,0)(5,-10)

\psdot[dotsize=2.5](-5,10)
\psdot[dotsize=2.5](15,0)
\psdot[dotsize=2.5](-5,-10)
\psdot[dotsize=2.5,dotstyle=Bo](5,10)
\psdot[dotsize=2.5,dotstyle=Bo](-15,0)
\psdot[dotsize=2.5,dotstyle=Bo](5,-10)

\rput(-7,13){$a_i$}
\rput(8,13){$f_i$}
\rput(-7,-13){$c_i$}
\rput(7,-13){$d_i$}
\rput(19,0){$g_i$}
\rput(-19,0){$z_i$}

\end{pspicture}
\end{center}
which we denote by $\mathrm{I}_{18}$. This is a trivalent graph with $18$ nodes and 
we need to look for its elliptic subdiagrams of rank $11$. Such an elliptic subdiagram 
is not possible if all connected components of this elliptic subdiagram are of type 
$\mathrm{A}_k$ with $k\leq2$. For example, an elliptic subdiagram of type 
$\mathrm{A}_1\sqcup5\mathrm{A}_2$ is impossible, since we need at least $(3+5.4)/3>7$
additional nodes, which all together makes more than the total available $18$ nodes.
Hence the $\mathrm{I}_{26}$ diagram has no elliptic subdiagram of rank $13$ with
one component of type $\mathrm{A}_2$ and all others of type $\mathrm{A}_k$ with $k\leq2$.

The case of parabolic subdiagrams of $\mathrm{I}_{26}$ of maximal rank $12$ 
is easy, and gives the two possibilities as stated in the lemma.
\end{proof}

The next step is an explicit description of the actual and ideal vertices of the finite
volume hyperbolic polytope obtained from the convex cone
\[ P=\{x\in\mathbb{R}^{13,1};(x,e_p)\leq0,(x,e_l)\leq0\;
\mathrm{for all}\;p\in\mathcal{P},l\in\mathcal{L}\} \]
corresponding to the various types described in the previous lemma. As before we
denote by $e_l=e_0-\sum_{p\in l}e_p$ for $l\in\mathcal{L}$ the simple norm 3 roots.
A projective duality in $\mathrm{PGL}_3(3)\cdot2$ corresponds to an isometry
\[ v_{\mathcal{P}}=e_0\mapsto v_{\mathcal{L}}=(4e_0-\sum_{p\in\mathcal{P}}e_p)/\sqrt3,
e_p\mapsto e_l/\sqrt3 \]
as before. The analogue of Lemma{\;\ref{vertices of P over 2 lemma}} gives the following
description.

\begin{lemma}\label{vertices of P over 3 lemma}
The actual vertices of the hyperbolic polytope obtained from P are represented by the vectors
\[ v_{\mathcal{P}}=e_0,v_{\mathcal{L}}=(4e_0-\sum_{q\in\mathcal{P}}e_q)/\sqrt3 \]
of type $13\mathrm{A}_1$, and by the vectors
\begin{gather*}
v_{pqr}=2e_0-(e_p+e_q+e_r),
v_{lmn}=(5e_0-2\sum_{s\notin l\cup m\cup n}e_s-\sum_{s\in(l\cup m\cup n)'}e_s)/\sqrt3
\end{gather*}
of type $4\mathrm{A}_1\sqcup3\mathrm{A}_3$ for $p,q,r\in\mathcal{P}$ not all on a line and 
$l,m,n\in\mathcal{L}$ not all through a point with $(l\cup m\cup n)'$ the set of $6$ points 
on exactly one of these $3$ lines, and by the vectors 
\[ v_{p,l}=(3e_0-\sum_{q\notin l\sqcup p}e_q),
v_{l,p}=(4e_0-3e_p-\sum_{q\in l}e_q)/\sqrt3 \]
of type $\mathrm{A}_1\sqcup4\mathrm{A}_3$ for all $p\in\mathcal{P},l\in\mathcal{L}$ 
with $p\notin l$, and by the vectors
\begin{gather*}
v_{p,q,l}=3e_0-2e_p-\sum_{r\in l,r\neq q}e_r,
v_{l,m,p}=(7e_0-2\sum_{r\notin l\sqcup p}e_r-\sum_{r\in m,r\neq p}e_r)/\sqrt3
\end{gather*}
of type $2\mathrm{A}_1\sqcup\mathrm{A}_2\sqcup3\mathrm{A}_3$ for all 
$p,q\in\mathcal{P}$ and $l,m\in\mathcal{L}$ with $p\notin l,q\in l$ and likewise
$p\notin l,p\in m$, and by the vectors
\begin{gather*}
v_{p,qrs}=4e_0-2(e_q+e_r+e_s)-(e_u+e_v+e_w) \\
v_{k,lmn}=(7e_0-4e_p-3(e_q+e_r+e_s)-(e_x+e_y+e_z))/\sqrt3
\end{gather*}
of type $\mathrm{A}_1\sqcup3\mathrm{A}_4$ for all $p,q,r,s\in\mathcal{P}$ in general position (no three on a line) with $u\in l_{pq}-l_{rs}-\{p,q\}$, 
$v\in l_{pr}-l_{qs}-\{p,r\}$, $w\in l_{ps}-l_{qr}-\{p,s\}$ for $v_{p,qrs}$ 
and likewise for all $k,l,m,n\in\mathcal{L}$ in general position (no three 
through a point) with $k$ the line through $p$ but not through $q,r,s$ and 
$x=k\cap l_{rs}$, $y=k\cap l_{qs}$, $z=k\cap l_{qr}$ for $v_{k,lmn}$ 
(see picture below), and by the vectors
\begin{gather*}
v_{p,qr,s}=5e_0-3e_p-2(e_q+e_r+e_s)-(e_x+e_y) \\
v_{k,lm,n}=(9e_0-5e_p-4(e_r+e_s)-3n_q-2(n_y+n_z)-n_h)/\sqrt3
\end{gather*}
of type $\mathrm{A}_2\sqcup\mathrm{A}_3\sqcup2\mathrm{A}_4$ for all $p,q,r,s\in\mathcal{P}$ 
in general position for $v_{p,qr,s}$ and likewise for all $k,l,m,n\in\mathcal{L}$ in general 
position with $h=l_{pq}\cap l_{rs}$, $i=l_{pr}\cap l_{qs}$, $j=l_{ps}\cap l_{qr}$
for $v_{k,lm,n}$ (see picture below), and by the vectors
\begin{gather*}
v_{p,q,rs}=3e_0-2e_p-(e_q+e_r+e_s+e_x) \\ 
v_{k,l,mn}=(6e_0-3(e_q+e_r)-2(e_s+e_u+e_v)-(e_w+e_h+e_i))/\sqrt3
\end{gather*}
of type $3\mathrm{A}_3\sqcup\mathrm{A}_4$ for all $p,q,r,s\in\mathcal{P}$ in general position 
for $v_{p,q,rs}$ and likewise for all $k,l,m,n\in\mathcal{L}$ in general position for $v_{k,l,mn}$
(see picture below).

The ideal vertices are represented by the vectors
\[ u_p=e_0-e_p,u_l=(3e_0-\sum_{p\notin l}e_p)/\sqrt3 \]
of type $4\mathrm{D}_4$ for all $p\in\mathcal{P},l\in\mathcal{L}$, and by the vectors
\[ u_{pqrs}=2e_0-(e_p+e_q+e_r+e_s),u_{klmn}=(4e_0-2\sum_pe_p-\sum_qe_q)/\sqrt3 \]
of type $3\mathrm{A}_5$ for any $p,q,r,s\in\mathcal{P}$ in general position and any
$k,l,m,n\in\mathcal{L}$ in general position with $\sum_p$ is the sum over the three 
$p\in\mathcal{P}$ on none of the lines $k,l,m,n$ and $\sum_q$ is the sum over the four 
$q\in\mathcal{P}$ on exactly one of the lines $k,l,m,n$.
\end{lemma}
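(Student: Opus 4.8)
The plan is to follow exactly the same strategy as in the proof of Lemma~\ref{vertices of P over 2 lemma}: since the hyperbolic polytope obtained from $P$ is a finite volume convex acute-angled polytope (finite volume being guaranteed by the Vinberg criterion, using the parabolic subdiagrams $3\mathrm{A}_5$ and $4\mathrm{D}_4$ and the absence of Lann\'er subdiagrams), its actual vertices correspond bijectively to the elliptic subdiagrams of $\mathrm{I}_{26}$ of maximal rank $13$, and its ideal vertices correspond to the parabolic subdiagrams of maximal rank $12$. The previous combinatorial lemma already enumerates these: the actual vertices come from the seven types $13\mathrm{A}_1$, $4\mathrm{A}_1\sqcup3\mathrm{A}_3$, $\mathrm{A}_1\sqcup4\mathrm{A}_3$, $2\mathrm{A}_1\sqcup\mathrm{A}_2\sqcup3\mathrm{A}_3$, $\mathrm{A}_1\sqcup3\mathrm{A}_4$, $\mathrm{A}_2\sqcup\mathrm{A}_3\sqcup2\mathrm{A}_4$, $3\mathrm{A}_3\sqcup\mathrm{A}_4$, and the ideal vertices from $4\mathrm{D}_4$ and $3\mathrm{A}_5$. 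So the task reduces to solving, for each type, the linear system expressing that the candidate vector is orthogonal to all roots in the subdiagram and pairs non-positively (in fact zero for the relevant walls) with the simple roots of that subdiagram, and then checking the vector lies in the forward cone $\mathbb{R}^{13,1}_+$.

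The key computational steps, in order, are as follows. First I would fix a convenient representative subdiagram of each type using the incidence combinatorics of $\mathbb{P}^2(3)$ — for instance, for type $13\mathrm{A}_1$ take all thirteen point-nodes (resp. all thirteen line-nodes); for the mixed types, use a specific choice of points in general position (no three collinear) and the associated lines, exactly as the statement of the lemma already anticipates through the notation $l_{pq}$, $(l\cup m\cup n)'$, etc. Second, for the point-node part of each subdiagram I would write the vertex as $v = a e_0 - \sum_p c_p e_p$ and impose $(v,e_p)=0$ for $p$ in an $\mathrm{A}_k$-chain of point-nodes, which forces the coefficients $c_p$ along the chain; then impose $(v,e_l)=0$ for the line-nodes in the subdiagram, which ties together the point-coefficients across the chains via the incidence relations $(e_p,e_l)=-1$ or $0$. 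Third, I would verify that the resulting $v$ is primitive up to the stated $\sqrt3$ normalization, has $(v,v)<0$, and that $(v,e_p)\le0$, $(v,e_l)\le0$ for \emph{all} $p\in\mathcal{P}$, $l\in\mathcal{L}$ (not merely the ones in the subdiagram) — this last check is what certifies that $v$ is genuinely a vertex of $P$ rather than a point outside it. Fourth, I would use the projective duality isometry $v_{\mathcal{P}}\mapsto v_{\mathcal{L}}$, $e_p\mapsto e_l/\sqrt3$ to produce the "line-version" vertex (the one with a $/\sqrt3$) from the "point-version" one, halving the work. Finally, for the ideal vertices I would solve the rank-$12$ parabolic systems $4\mathrm{D}_4$ and $3\mathrm{A}_5$ and check that the resulting vectors are isotropic, $(u,u)=0$, with all inner products against the $e_p$ and $e_l$ non-positive.

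The main obstacle will be the bookkeeping for the most complicated types, namely $\mathrm{A}_1\sqcup3\mathrm{A}_4$, $\mathrm{A}_2\sqcup\mathrm{A}_3\sqcup2\mathrm{A}_4$ and $3\mathrm{A}_3\sqcup\mathrm{A}_4$: here one must correctly identify which points appear with coefficient $1$, $2$, $3$, $4$, and $5$, and that assignment is dictated by fairly delicate incidence configurations in $\mathbb{P}^2(3)$ — e.g. the points $u,v,w$ in $v_{p,qrs}$ defined by $u\in l_{pq}-l_{rs}-\{p,q\}$ and so on, or the points $x,y,z$ where the line $k$ meets the three "diagonal" lines $l_{rs}, l_{qs}, l_{qr}$ of the quadrilateral $q,r,s$. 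I expect the cleanest way to handle this is to draw, for each of these three types, the explicit sub-configuration of points and lines (the "see picture below" referenced in the lemma), read off the $\mathrm{A}_4$-chains as zig-zags point-line-point-line-point in the incidence graph, and then the coefficient of a point-node in the chain is determined by its position: the apex of the chain gets the extremal coefficient and the coefficients decrease by one along each step, with the cross-chain line-node conditions $(v,e_l)=0$ fixing the overall additive normalization. Once the picture is fixed the verification of $(v,\cdot)\le0$ against the remaining $26-\mathrm{(rank)}$ nodes is a finite check that, while tedious, is entirely mechanical; I would organize it by exploiting the residual symmetry group (a subgroup of $\mathrm{PGL}_3(3)$) stabilizing the chosen subdiagram, so that only orbit representatives among the remaining nodes need to be checked.
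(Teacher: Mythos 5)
Your proposal is correct and follows essentially the same route as the paper: actual and ideal vertices are read off from the maximal-rank elliptic ($13$) and parabolic ($12$) subdiagrams classified in the preceding combinatorial lemma, the representing vectors are found by solving the orthogonality conditions against the roots of each subdiagram, and projective duality halves the work. The only difference is organizational: the paper carries out the harder cases ($\mathrm{A}_1\sqcup3\mathrm{A}_4$, $\mathrm{A}_2\sqcup\mathrm{A}_3\sqcup2\mathrm{A}_4$, $3\mathrm{A}_3\sqcup\mathrm{A}_4$, $3\mathrm{A}_5$) inside a fixed $\mathrm{Y}_{555}$ subdiagram of $\mathrm{I}_{26}$, whereas you propose ad hoc incidence pictures and residual stabilizer subgroups, and the paper relies on the general acute-angled polytope theory to guarantee that each maximal-rank elliptic subdiagram really yields a vertex of $P$ rather than re-verifying all $26$ inequalities as you suggest.
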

 
\begin{center}
\psset{unit=0.5mm}
\begin{pspicture}*(-70,-95)(150,70)

\psline(-70,0)(150,0)\psline(0,-100)(0,90)
\psline(-69,-23)(120,40)\psline(-68,24)(204,-72)

\psline(-10,-100)(100,120)\psline(-40,136)(100,-144)\psline(-50,45)(200,-80)

\psdot[dotsize=3](0,0)
\psdot[dotsize=3](24,8)
\psdot[dotsize=3](40,0)
\psdot[dotsize=3](34,-12)
\psdot[dotsize=3](0,-80)
\psdot[dotsize=3](0,56)
\psdot[dotsize=3](0,20)
\psdot[dotsize=3](48,16)
\psdot[dotsize=3](136,-48)
\psdot[dotsize=3](28,0)
\psdot[dotsize=3](-58.285,-19.428)
\psdot[dotsize=3](-29.565,0)
\psdot[dotsize=3](-34,12)

\rput(4,-7){$p$}
\rput(27,14){$q$}
\rput(47,4){$r$}
\rput(28,-15){$s$}
\rput(-5,-79){$x$}
\rput(-5,55){$y$}
\rput(-4,17){$z$}
\rput(-57,-24){$u$}
\rput(-30,-5){$v$}
\rput(-35,17){$w$}
\rput(-4,-40){$k$}
\rput(45,22){$h$}
\rput(24,-4){$i$}
\rput(136,-41){$j$}

\end{pspicture}
\end{center}

\begin{proof}
The vertices $v_{\mathcal{P}},v_{p,l},u_p$ are analogous to the vertices described in
Lemma{\;\ref{vertices of P over 2 lemma}}, and likewise $v_{\mathcal{L}},v_{l,p},u_l$ 
are found by projective duality. It is obvious that $v_{\mathcal{P}}$ is of type
$13\mathrm{A}_1$ as all black nodes in $\mathrm{I}_{26}$. It is also clear from the
picture below that $u_p$ is of type $4\mathrm{D}_4$ (namely $\{l_i,q_i,r_i,s_i\}$ 
for $i=1,2,3,4$) and that $v_{l,p}$ is of type $\mathrm{A}_1\sqcup4\mathrm{A}_3$
(namely $\{l\}\sqcup\{l_i,q_i,r_i\}$ for $i=1,2,3,4$).
\begin{center}
\psset{unit=0.5mm}
\begin{pspicture}*(-30,-40)(120,70)

\psline(80,-40)(80,60)
\psline(-30,-15)(100,50)
\psline(-30,-7.5)(100,25)
\psline(-30,0)(100,0)
\psline(-30,7.55)(100,-25)

\psdot[dotsize=3](0,0)
\psdot[dotsize=3](30,15)
\psdot[dotsize=3](60,30)
\psdot[dotsize=3](80,40)
\psdot[dotsize=3](32,8)
\psdot[dotsize=3](64,16)
\psdot[dotsize=3](80,20)
\psdot[dotsize=3](32,0)
\psdot[dotsize=3](64,0)
\psdot[dotsize=3](80,0)
\psdot[dotsize=3](30,-7.5)
\psdot[dotsize=3](60,-15)
\psdot[dotsize=3](80,-20)

\rput(0,-6){$p$}
\rput(30,20){$q_1$}
\rput(58,35){$r_1$}
\rput(85,36){$s_1$}
\rput(37,4){$q_2$}
\rput(63,10){$r_2$}
\rput(85,16){$s_2$}
\rput(37,-4){$q_3$}
\rput(63,-5){$r_3$}
\rput(85,-4){$s_3$}
\rput(28,-12){$q_4$}
\rput(58,-20){$r_4$}
\rput(85,-26){$s_4$}
\rput(76,60){$l$}
\rput(105,50){$l_1$}
\rput(105,25){$l_2$}
\rput(105,0){$l_3$}
\rput(105,-25){$l_4$}

\end{pspicture}
\end{center}

We shall describe the details for finding the ideal vertices $u_{pqrs}$ of type
$3\mathrm{A}_5$. Consider the maximal tree subdiagram of $\mathrm{I}_{26}$ of type 
$\mathrm{Y}_{555}$ with nodes $\{f,e_i,d_i,c_i,b_i,a_i\}$ for $i=1,2,3$. 
There are $4$ remaining black nodes in the $\mathrm{I}_{26}$ diagram, namely 
$p=a,q=g_1,r=g_2,s=g_3$. It is now obvious from the $\mathrm{I}_{26}$ diagram that 
$(u_{pqrs},e_x)=0$ for all $9$ black (point) nodes $x$ of the $\mathrm{Y}_{555}$ 
subdiagram, and also 
\[ (u_{pqrs},e_y)=(u_{pqrs},e_0-\sum_{x\in y}e_x)=-2+|\{p,q,r,s\}\cap y|=0 \]
for all $6$ white (line) nodes $y\neq f$ of the $\mathrm{Y}_{555}$ subdiagram. The other
ideal vertices $u_{klmn}$ are obtained by projective duality.

Similar straightforward calculations work for the remaining actual vertices, 
for example using that the elliptic subdiagrams
\[ \mathrm{A}_1\sqcup3\mathrm{A}_4, \mathrm{A}_2\sqcup\mathrm{A}_3\sqcup2\mathrm{A}_4,   
3\mathrm{A}_3\sqcup\mathrm{A}_4 \] 
of $\mathrm{I}_{26}$ are all realized within a $\mathrm{Y}_{555}$ subdiagram.
\end{proof}

In order to show that the chamber $P$ for the diagram $\mathrm{I}_{26}$ is
contained in the chamber $G$ for the norm one roots we check that all the vertices 
of $P$ are contained in $G$. Since $G$ is invariant under the symmetric group $S_{13}=W(\mathrm{A}_{12})$ (with $\mathrm{A}_{12}$ the subdiagram of the second 
Coxeter--Vinberg diagram in Theorem{\;\ref{Vinberg theorem} with nodes numbered 
$1,\cdots,12$) this amounts to checking that all conjugates under $S_{13}$ of the 
$12$ actual and $4$ ideal vertices enumerated in the previous lemma lie in $G$.

\begin{lemma}
All vertices of $P$ given in the previous lemma are contained in the fundamental 
chamber $G$ for the norm $1$ roots.
\end{lemma}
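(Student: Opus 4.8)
The plan is to follow the same template used for the $n=7$ case in the proof of Theorem~\ref{projective plane of order 2 theorem}: reduce the inclusion $P\subset G$ to a finite check on vertices via the Minkowski convex hull theorem, and then carry out that check by evaluating the simple roots of $G$ on the explicit vertex representatives from Lemma~\ref{vertices of P over 3 lemma}. Recall that $G$ is the Gosset-type chamber for the Coxeter group $\Gamma^{13}_1$, bounded by the walls orthogonal to the positive norm $1$ roots, and that $P\subset G$ is equivalent to the assertion that every vertex of the hyperbolic polytope attached to $P$ lies in $G$, i.e. has non-positive inner product with every positive norm $1$ root. Since $G$ is invariant under $S_{13}=W(\mathrm A_{12})$, and the vertices of $P$ come in $S_{13}$-orbits (indexed by the combinatorial types of Lemma~\ref{vertices of P over 3 lemma} together with the action of $\mathrm{PGL}_3(3)$), it suffices to check one representative from each orbit against a set of orbit representatives for the positive norm $1$ roots under $S_{13}$.

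First I would enumerate the $S_{13}$-orbit representatives of the positive norm $1$ roots in $\mathbb Z^{13,1}$. These are the roots $\alpha$ with $\alpha^2=1$ that are positive on the fundamental chamber $D$; concretely they are the vectors $ke_0-\sum_{i\in S}e_i$ with appropriate multiplicities making $\alpha^2=1$ — for instance $e_p$, $e_0-e_p-e_q$, $2e_0-(\text{sum of five }e_i)$, and so on up through the roots with $e_0$-coefficient matching the Weyl vector situation (here bounded by the del Pezzo degree $1$ geometry, analogous to the $56$ roots for $\mathrm E_7$). I would organize these by the coefficient $k$ of $e_0$, since the pairing of such a root with a vertex $v=me_0-\sum c_ie_i$ (or its $\sqrt3$-scaled line version) is $-km-\sum_{i\in S}c_i$ up to sign conventions, and the sign of this expression is exactly what must be controlled.

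Then I would go type by type through the vertices in Lemma~\ref{vertices of P over 3 lemma}. For each of the actual vertices $v_{\mathcal P}$, $v_{\mathcal L}$, $v_{pqr}$, $v_{lmn}$, $v_{p,l}$, $v_{l,p}$, $v_{p,q,l}$, $v_{l,m,p}$, $v_{p,qrs}$, $v_{k,lmn}$, $v_{p,qr,s}$, $v_{k,lm,n}$, $v_{p,q,rs}$, $v_{k,l,mn}$ and each of the ideal vertices $u_p$, $u_l$, $u_{pqrs}$, $u_{klmn}$, I would compute the set of values taken by each orbit representative of the positive norm $1$ roots, exactly as displayed in the $n=7$ proof (where the four root types took values in explicit sets like $\{-4,-3,-2,-1\}$). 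By projective duality in $\mathrm{PGL}_3(3)\cdot2$ — which sends $v_{\mathcal P}\mapsto v_{\mathcal L}$ and $e_p\mapsto e_l/\sqrt3$ — the "line" vertices are handled simultaneously with the "point" vertices, halving the work; and the key combinatorial input is always the incidence count $|\{p,q,\dots\}\cap l|$ of a chosen point-set with a line, which is what makes the sums come out to integers in a bounded range. The conclusion in each case is that every such value is $\le 0$, hence each vertex lies in $G$, and $P\subset G$ follows.

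The main obstacle is purely bookkeeping: there are many vertex types, each a somewhat intricate vector built from configurations of points and lines in $\mathbb P^2(3)$ in "general position", and one must choose a convenient representative configuration (e.g. realized inside a $\mathrm Y_{555}$ subdiagram of $\mathrm I_{26}$, as in the previous lemma) so that all the incidence numbers are transparent, then verify that the worst case of each pairing is still non-positive. The genuinely delicate point is making sure the representatives chosen for the vertex and for the root are compatible, i.e. that running over the full $S_{13}$-orbit of roots against a single vertex representative really does cover all cases — here one uses that $S_{13}$ acts transitively enough on the point set that any root type has a representative supported on the specific point-subset appearing in the vertex. Once the representatives are fixed, each individual check is a short integer computation of the form "compute $-km-\sum_{i\in S}c_i$ and observe it is $\le 0$", and the proof is complete by Minkowski.
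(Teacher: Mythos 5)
There is a genuine gap, and it is located exactly where you lean on the ``same template as the $n=7$ case.'' For $n=7$ the direct wall-check works because $\Gamma^7_0=W(\mathrm{E}_7)$ is finite, so the Gosset chamber $G$ has only $56$ walls falling into four explicit $S_7$-orbits of norm $1$ roots with bounded $e_0$-coefficient. For $n=13$ this finiteness fails: the norm $2$ simple roots $\alpha_0,\dots,\alpha_{12},\alpha_{14}$ span a Coxeter diagram of indefinite type, so $\Gamma^{13}_0$ is an \emph{infinite} hyperbolic Coxeter group, $G=\cup_{\gamma\in\Gamma^{13}_0}\gamma D$ has infinitely many walls (the $\Gamma^{13}_0$-orbit of $e_{13}^{\perp}$), and these walls fall into infinitely many $S_{13}$-orbits with unbounded coefficient of $e_0$. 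Your assertion that the relevant positive norm $1$ roots are ``bounded by the del Pezzo degree $1$ geometry, analogous to the $56$ roots for $\mathrm{E}_7$'' is a false analogy --- the paper itself signals the breakdown by noting that $\Gamma^n_1=\Gamma^n(2)$ only for $n\leq7$. Consequently your finite enumeration ``organized by the coefficient $k$ of $e_0$'' never closes up: no matter how many orbit representatives you test, there remain walls of $G$ you have not checked, and no uniform estimate is offered to handle the tail.

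The paper avoids this entirely by a descent argument rather than a wall-check. For each vertex $u$ of $P$ one computes $(u,\alpha_0)$ with $\alpha_0=e_0-e_1-e_2-e_3$; if it is positive one replaces $u$ by $s_0(u)$ and renormalizes by an element of $S_{13}=\langle s_1,\dots,s_{12}\rangle$, and one iterates until a point of the Vinberg chamber $D$ is reached (this is the content of the table with symbols such as $42^31^4\to21^4\to11$). Since every reflection used is in a norm $2$ simple root, the vertex lies in $wD$ for some $w$ in the group generated by $s_0,\dots,s_{12}$, hence in $G=\cup_{\gamma\in\Gamma^{13}_0}\gamma D$; combined with Minkowski this gives $P\subset G$. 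If you want to keep a direct-verification flavour, you would have to supply the missing ingredient: either a proof that the reduction terminates for each vertex type (which is what the table does), or a uniform bound showing $(v,\alpha)\leq0$ for \emph{all} roots $\alpha$ in the infinite $\Gamma^{13}_0$-orbit of $e_{13}$. As written, your argument proves containment only in a proper subchamber cut out by finitely many of the walls of $G$.
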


\begin{proof}
This is a straightforward computation. For example, the last ideal vertex
$u:=4e_0-2(e_1+e_2+e_3)-(e_4+e_5+e_6+e_7)$, which will be abbreviated $42^31^4$, satisfies $(u,\alpha_0=e_0-e_1-e_2-e_3)=-4+6=2$. Hence $s_0(u)=u-2\alpha_0$ is conjugated under $S_{13}$ to $u:=21^4$. Since $(u,\alpha_0)=-2+3=1$ we see that 
$s_0(u)=u-\alpha_0$ is conjugated under $S_{13}$ to $u:=11$, which stands for 
$e_0-e_1$. This is a cusp of $D$. Hence our original vertex $u$ of $P$ is 
separated from $D$ by mirrors in norm $2$ roots, and therefore $u$ lies in $G$. 

Using our compact notation the $18$ cases can be conveniently summarized in the 
following table and the lemma follows from the observation that in each row the most 
right symbol represents a point in the chamber $D$ for $\Gamma^{13}$.

\begin{center}
\begin{tabular}{|l|l|l|l|l|} \hline
$13\mathrm{A}_1$ & $1$ & $ $ & $ $ & $ $  \\ 
$ $ & $41^{13}$ & $ $ & $ $ & $ $  \\ 
$4\mathrm{A}_1\sqcup3\mathrm{A}_3$ & $21^3$ & $1$ & $ $ & $ $  \\ 
$ $ & $52^41^6$ & $421^9$ & $ $ & $ $  \\ 
$\mathrm{A}_1\sqcup4\mathrm{A}_3$ & $31^8$ & $ $ & $ $ & $ $  \\ 
$ $ & $431^4$ & $321^2$ & $21$ & $ $  \\ 
$2\mathrm{A}_1\sqcup\mathrm{A}_2\sqcup3\mathrm{A}_3$ & $321^3$ & $21^2$ & $ $ & $ $ \\
$ $ & $73^22^61$ & $62^71^2$ & $ $ & $ $ \\
$\mathrm{A}_1\sqcup3\mathrm{A}_4$ & $42^31^3$ & $21^3$ & $1$ & $ $  \\ 
$ $ & $743^31^3$ & $431^4$ & $321^2$ & $21$  \\ 
$\mathrm{A}_2\sqcup\mathrm{A}_3\sqcup2\mathrm{A}_4$ & $532^31^2$ & $321^3$ & $21^2$ & $ $  \\ 
$ $ & $954^232^21$ & $532^21^2$ & $31^3$ & $ $  \\ 
$3\mathrm{A}_3\sqcup\mathrm{A}_4$ & $321^4$ & $21^3$ & $1$ & $ $  \\
$ $ & $63^22^31^3$ & $42^21^5$ & $31^6$ & $ $  \\ 
\hline
$4\mathrm{D}_4$ & $11$ & $ $ & $ $ & $ $  \\ 
$$ & $31^9$ & $ $ & $ $ & $ $  \\ 
$3\mathrm{A}_5$ & $21^4$ & $11$ & $ $ & $ $  \\ 
$ $ & $42^31^4$ & $21^4$ & $11$ & $$  \\ \hline
\end{tabular}
\end{center}

Since we only used reflections $s_i$ for $i=0,1,\cdots,12$ in our reduction pattern
we have in fact shown that
\[ P\subset \cup_{w\in W(\mathrm{E}_{13})}wD\subset G \] 
with $\mathrm{E}_{13}$ the subdiagram of the second Coxeter--Vinberg diagram
in Theorem{\;\ref{Vinberg theorem}} with nodes numbered $0,1,\cdots,12$.

\end{proof}

This finishes the proof of Theorem{\;{\ref{projective plane of order 3 theorem}}}

\section{Conclusions}

The nodes of the $\mathrm{I}_{26}$ diagram are indexed by the set 
$\mathcal{I}=\mathcal{P}\sqcup\mathcal{L}$ of points and lines in $\mathbb{P}^2(3)$.
Let us write $\omega=(-1+\sqrt{-3})/2$ and $\theta=\sqrt{-3}$.
The Allcock lattice $L$ is a Lorentzian lattice over the ring of Eisenstein integers
$\mathcal{E}=\mathbb{Z}+\mathbb{Z}\omega$ with generators $\varepsilon_i$ for 
$i\in\mathcal{I}$ and Hermitian inner product
\[ \langle\varepsilon_i,\varepsilon_i\rangle=3,
   \langle\varepsilon_j,\varepsilon_k\rangle=0,
   \langle\varepsilon_p,\varepsilon_l\rangle=\theta\]
for all $i,j,k\in\mathcal{I}$ with $j\neq k$ disconnected and  all 
$p\in\mathcal{P}$ and $l\in\mathcal{L}$ connected in $\mathrm{I}_{26}$. 
Note that $\langle\lambda,\mu\rangle\in\mathcal{E}\theta$ for all $\lambda,\mu\in L$.
It is easy to see that $L$ has rank $14$ and signature $(13,1)$ and so is Lorentzian.

A vector $\varepsilon\in L$ of norm $\langle\varepsilon,\varepsilon\rangle=3$ is called 
a root. The order three complex reflection
\[ t_{\varepsilon}(\lambda)=\lambda+(\omega-1)\frac{\langle\lambda,\varepsilon\rangle}
   {\langle\varepsilon,\varepsilon\rangle}\varepsilon \]
is a unitary automorphism  of $L$, and is called the triflection with root $\varepsilon$. 
It was shown by Allcock that the roots in $L$ form a single orbit under $\mathrm{U}(L)$,
and Basak proved that $\mathrm{U}(L)$ is generated by triflections 
\cite{Allcock 2000},\cite{Basak 2006}. 

We extend scalars from $\mathcal{E}$ to $\mathbb{Z}[\zeta_{12}]$ and put 
\[  e_p=\varepsilon_p,e_l=-\sqrt{-1}\varepsilon_l \]
for $p\in\mathcal{P}$ and $l\in\mathcal{L}$. The Gram matrix of the $\{e_i\}$ becomes
\[ \langle e_i,e_i\rangle=3,\langle e_i,e_j\rangle=0,\langle e_j,e_k\rangle=-\sqrt3 \]
for all $i,j,k\in\mathcal{I}$ with $i\neq j$ disconnected and $j\neq k$ connected.
The real vector space $V$ spanned by the vectors $\{e_i;i\in\mathcal{I}\}$ is a 
Lorentzian vector space of dimension $14$. 

\begin{lemma}
The intersection $L\cap V$ is an integral Lorentzian lattice with inner product values
all contained in $3\mathbb{Z}$, and the integral lattice $L_r=L\cap V$ with the scalar 
product $(\cdot,\cdot)=\langle\cdot,\cdot\rangle/3$ is isomorphic to the odd unimodular 
lattice $\mathbb{Z}^{13,1}$. 
\end{lemma}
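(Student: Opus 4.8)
The plan is to exhibit an explicit $\Z$-basis of $L_r = L\cap V$ and compute its Gram matrix under $(\cdot,\cdot)=\langle\cdot,\cdot\rangle/3$, then identify the resulting lattice with $\Z^{13,1}$ by a change of basis. First I would record that by construction $\langle e_i,e_j\rangle\in\{3,0,-\sqrt3\}$, so the $\Z[\zeta_{12}]$-span of $\{e_i\}$ already has inner products in $\sqrt3\,\Z\subset\R$; the issue is that these $e_i$ need not lie in $L$ (since $e_l=-\sqrt{-1}\,\varepsilon_l$ involves $\sqrt{-1}\notin\mathcal E$), nor need they span $L\cap V$ over $\Z$. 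So the real first step is to pin down $L\cap V$ as a sublattice of the real span. I would argue that $V$ is precisely the fixed space of an antiunitary involution (complex conjugation composed with the scalar $\sqrt{-1}\mapsto-\sqrt{-1}$ adjustment built into the substitution $e_p=\varepsilon_p$, $e_l=-\sqrt{-1}\,\varepsilon_l$), so that $L\cap V$ is the ``real form'' of the Eisenstein lattice and has rank $14$ with signature $(13,1)$, matching the claim.

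Next, to get integrality of $(\cdot,\cdot)$ with values in $3\Z$ wait --- re-reading the statement: $L\cap V$ has $\langle\cdot,\cdot\rangle$-values in $3\Z$, hence $(\cdot,\cdot)=\langle\cdot,\cdot\rangle/3$ is integral (values in $\Z$). The key computation is thus: for $\lambda,\mu\in L$, we have $\langle\lambda,\mu\rangle\in\mathcal E\theta=\sqrt{-3}\,\mathcal E$ (stated in the excerpt), so $\langle\lambda,\mu\rangle=\sqrt{-3}(a+b\omega)$ with $a,b\in\Z$; if moreover $\lambda,\mu\in V$ then $\langle\lambda,\mu\rangle\in\R$, forcing the imaginary part to vanish. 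Writing $\sqrt{-3}(a+b\omega)=\sqrt{-3}\,a + b\sqrt{-3}\,\omega$ and using $\sqrt{-3}\,\omega=\sqrt{-3}(-1+\sqrt{-3})/2=(-3-\sqrt{-3})/2$, the real part is $-3b/2$ and the imaginary part is $\sqrt3(a-b/2)\cdot(\text{something})$; reality then forces $b$ even and the real value to be a multiple of $3$. I would do this elementary bookkeeping carefully to conclude $\langle\lambda,\mu\rangle\in 3\Z$ on $L\cap V$, so that $(\cdot,\cdot)$ takes integer values; a parallel computation on $\langle\lambda,\lambda\rangle$ handles the diagonal.

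Then I would produce an explicit basis. The natural candidates are the $13$ point-vectors $e_p=\varepsilon_p$ (which lie in $L$) together with one more vector coming from the lines; since $e_l=-\sqrt{-1}\,\varepsilon_l\notin L$ in general, I would instead take suitable $\mathcal E$-combinations such as $\theta^{-1}\langle\text{stuff}\rangle$ or, more concretely, mimic the real-lattice description used in Section~2: the vectors $e_p$ and a vector playing the role of $e_0$, e.g. the image under the identification of $v_{\mathcal P}$. Concretely I expect $L_r$ to have a basis $\{f_0,f_1,\dots,f_{13}\}$ with $(f_0,f_0)=-1$, $(f_p,f_q)=\delta_{pq}$, $(f_0,f_p)=0$ --- i.e. literally the standard basis of $\Z^{13,1}$ --- obtained from the point-vectors $\varepsilon_p/\sqrt3$ (norm $1$) and one timelike vector built from $\varepsilon_a$ together with a line-vector; the Fano-style relations $e_l=e_0-\sum_{p\in l}e_p$ from Theorem~\ref{projective plane of order 3 theorem} tell us exactly how the line-vectors sit relative to $e_0=f_0$ and the $e_p=f_p$. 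Verifying that these $f_i$ actually lie in $L\cap V$, that they are $\Z$-independent, and that they span (not just a finite-index sublattice) is the crux: spanning can be checked by computing the determinant of the Gram matrix, which must equal $\pm1$ for $\Z^{13,1}$, or by exhibiting every $\varepsilon_i$ and $\sqrt{-1}\,\varepsilon_l$ as a $\Z$-combination of the $f_i$ within $V$.

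The main obstacle I anticipate is precisely this last point --- showing $L\cap V$ is \emph{exactly} the lattice spanned by the obvious integral vectors and not some proper overlattice or sublattice. Rank and signature are easy; integrality of $(\cdot,\cdot)$ is the elementary $\mathcal E\theta$-computation above; but unimodularity (determinant $\pm1$) and oddness (existence of a norm-$(-1)$ or norm-$1$ vector, which the $\varepsilon_p/\sqrt3$ supply for oddness) require either a clean determinant computation on the Gram matrix built from the incidence structure of $\mathbb P^2(3)$, or a discriminant-group argument comparing $\mathrm{disc}(L\cap V)$ with $\mathrm{disc}(L)$ via the $\sqrt{-3}$-scaling. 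I would lean on the latter: the Allcock lattice $L$ over $\mathcal E$ has a known discriminant form, its real form under an antiunitary involution has discriminant computable from it, and matching this against the discriminant of $\Z^{13,1}$ (trivial group, determinant $-1$) forces the isomorphism, with Theorem~\ref{Vinberg theorem}'s reflection-generation or the uniqueness of odd unimodular lattices of signature $(13,1)$ closing the argument.
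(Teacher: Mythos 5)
Your preferred route --- showing $\langle\lambda,\mu\rangle\in\mathcal{E}\theta\cap\mathbb{R}=3\mathbb{Z}$ on $L\cap V$, then comparing the discriminant of the Eisenstein lattice $L$ with that of $L_r$ to force unimodularity, noting oddness from the norm-one vectors coming from the $\varepsilon_p$, and invoking the classification of odd indefinite unimodular lattices --- is exactly the paper's proof, which thereby avoids the explicit-basis construction you worry about. (Minor slip in your exploratory branch: under $(\cdot,\cdot)=\langle\cdot,\cdot\rangle/3$ it is $\varepsilon_p$ itself, not $\varepsilon_p/\sqrt3$, that has norm one.)
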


\begin{proof}
Indeed, inner product values of $L\cap V$ lie in $\mathcal{E}\theta\cap\mathbb{R}=3\mathbb{Z}$.
Hence for a root $\varepsilon\in L$ either $e=\omega^j\varepsilon\in L_r$ for some $j$ has norm $1$ 
(for example for $\varepsilon=\varepsilon_p$ for some $p\in\mathcal{P}$) or 
$\omega^j\varepsilon\notin L_r$ for all $j$ and $e=\omega^j\theta\varepsilon\in L_r$ for some $j$ has norm $3$
(for example for $\varepsilon=\varepsilon_l$ for some $l\in\mathcal{L}$).
Since the discriminant of the Eisenstein lattice $L$ is equal to $3^7$ it follows that 
the discriminant of the integral lattice $L_r$ should divide $3^{14}/3^{14}=1$.
Hence the lattice $L_r$ is unimodular, odd, of signature $(13,1)$ and so
isomorphic to $\mathbb{Z}^{13,1}$ \cite{Serre 1970}.
\end{proof}

The conclusion is that the intersection of the complex mirror arrangement in
$\mathbb{C}\otimes_{\mathbb{R}}V$ for all norm three roots in $L$ with the real 
form $V$ consists of the mirror arrangement for all norm one roots in $L_r$ together 
with the transform of this arrangement under the orthogonal involution of $V$ 
coming from a projective duality on $\mathbb{P}^2(3)$. Hence the 
results of the previous section indeed prove Conjecture 1.7 made by one of us 
in \cite{Heckman 2013}. Using the results of that paper we arrive at 

\begin{theorem}\label{monstrous proposal theorem}
Let $\mathbb{B}=\{z\in\mathbb{C}\otimes_{\mathbb{R}}V;\langle z,z\rangle<0\}/\mathbb{C}^{\times}
\subset\mathbb{P}(\mathbb{C}\otimes_{\mathbb{R}}V)$ 
be the complex hyperbolic ball and let $\mathbb{B}^{\circ}$ be the mirror
arrangement complement. If $\Gamma=\mathrm{PU}(L)$ then the orbifold fundamental group 
$\Pi_1^{\mathrm{orb}}(\mathbb{B}^{\circ}/\Gamma)$, which according to a theorem of 
Allcock and Basak \cite{Allcock--Basak 2014} is isomorphic to a factor group of the Artin group 
$\mathrm{Art}(\mathrm{I}_{26})$ with generators $T_i$ for $i\in\mathcal{I}$ and braid relations
\[ T_iT_j=T_jT_i\;\;,\;\;T_kT_lT_k=T_lT_kT_l \]
for all $i,j,k,l\in\mathcal{I}$ with $i,j$ disconnected and $k,l$ connected, has as 
factor group after imposing the Coxeter relations $T_i^2=1$ one of the following three groups, 
either $M\wr2=(M\times M)\rtimes S_2$ (the bimonster group) or $S_2$ or the trivial group. 
\end{theorem}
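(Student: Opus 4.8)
The plan is to combine the lattice-theoretic inclusion $P \subset G$ (Theorem~\ref{projective plane of order 3 theorem}), just proven, with the results of \cite{Heckman 2013} to identify the orbifold fundamental group as a quotient of the bimonster presentation. First I would recall the Allcock--Basak theorem \cite{Allcock--Basak 2014}: the orbifold fundamental group $\Pi_1^{\mathrm{orb}}(\mathbb{B}^{\circ}/\Gamma)$ is a quotient of $\mathrm{Art}(\mathrm{I}_{26})$, and then observe that imposing $T_i^2 = 1$ produces a quotient of the Coxeter group $W(\mathrm{I}_{26})$, which by the Conway--Simons presentation \cite{Conway--Simons 2001} surjects onto the bimonster $M \wr 2$ — indeed the bimonster is precisely $W(\mathrm{I}_{26})$ modulo the ``spider'' or ``deflation'' relations. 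So the abstract group in question sits between these, and the real content is to pin down which relations actually hold in $\Pi_1^{\mathrm{orb}}(\mathbb{B}^{\circ}/\Gamma)$.

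The key geometric input is the lemma just established together with the final remark of the previous section: the intersection of the complex mirror arrangement in $\mathbb{C}\otimes_{\mathbb{R}} V$ with the real form $V$ is exactly the union of the norm-one mirror arrangement for $L_r \cong \mathbb{Z}^{13,1}$ and its image under a projective duality. The chamber $G$ for $\Gamma^{13}_1$ and its duality-translate $G'$ therefore subdivide a neighbourhood of the real locus, and $P = G \cap G'$ by the inclusion $P \subset G$ (and its dual $P \subset G'$) combined with the fact that $P$ is already a fundamental domain for the Coxeter group $W(\mathrm{I}_{26})$ acting with walls marked $4$ (or rather the relevant real reflection group). The step I would carry out next is to transport this picture through the period map / monodromy framework of \cite{Heckman 2013}: the generators $T_i$ correspond to meridians around the $26$ mirror components, the braid relations $T_iT_j = T_jT_i$ and $T_kT_lT_k = T_lT_kT_l$ are forced by the local structure of the arrangement (disjoint mirrors vs.\ mirrors meeting at angle $\pi/3$, i.e.\ an $\mathrm{A}_2$ configuration of the order-three triflections), and the relation $T_i^2 = 1$ is the extra relation defining the orbifold group of the \emph{real} locus rather than the complex one. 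Conjecture~1.7 of \cite{Heckman 2013}, now a theorem by Theorem~\ref{projective plane of order 3 theorem}, says precisely that this real orbifold structure is the one controlled by $\mathrm{I}_{26}$ with the right incidence pattern, so the quotient is a quotient of the bimonster presentation.

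Finally I would invoke the group-theoretic classification from \cite{Heckman 2013}: any quotient of the Coxeter-quotient of $\mathrm{Art}(\mathrm{I}_{26})$ in which the $26$ generating involutions remain a conjugacy class of non-trivial elements (which is automatic since they are honest reflections/monodromies) is, by the structure of the bimonster and its normal subgroups, one of $M \wr 2$, $S_2$, or the trivial group — the point being that $M \wr 2$ has very few normal subgroups (the two diagonal/antidiagonal copies related to $M \times M$, the centre, and $M \times M$ itself), so the only possible further collapses of the $26$-generator Coxeter presentation are the stated ones. The expected main obstacle is the middle step: rigorously matching the combinatorial chamber inclusion $P \subset G$ with the monodromy presentation, i.e.\ checking that the walls of $P$ really do correspond to meridian generators satisfying exactly the $\mathrm{I}_{26}$ braid relations and no more, so that the orbifold group is genuinely a quotient of $\mathrm{Art}(\mathrm{I}_{26})/(T_i^2)$ rather than of some finer Artin group; this is where the detailed vertex analysis of Lemma~\ref{vertices of P over 3 lemma} (the local parabolic subdiagrams of types $3\mathrm{A}_5$ and $4\mathrm{D}_4$ at the cusps, and the elliptic ones at the finite vertices) is needed to verify that no relations beyond the braid and Coxeter relations are imposed by the geometry, after which the classification of quotients finishes the argument.
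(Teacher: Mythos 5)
Your overall architecture is the same as the paper's: prove $P\subset G$ (Theorem~\ref{projective plane of order 3 theorem}), pass through the real--form lemma to establish Conjecture~1.7 of \cite{Heckman 2013}, and then combine the Allcock--Basak theorem with the Conway--Simons description of the bimonster; the paper itself gives no more detail than this chain of citations. But your reconstruction has a genuine gap at the decisive step. In your final paragraph you claim that any quotient of $\mathrm{Art}(\mathrm{I}_{26})/(T_i^2=1)=W(\mathrm{I}_{26})$ in which the $26$ generators survive as a conjugacy class of nontrivial involutions must be $M\wr 2$, $S_2$ or trivial. That is false: $W(\mathrm{I}_{26})$ itself is such a quotient (the diagram is connected, so the generators form a single conjugacy class of involutions) and it is an infinite Coxeter group. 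The trichotomy is only available once one knows the group is a quotient of the bimonster, i.e.\ once the Conway--Simons deflation relations for the free dodecagons $\tilde{\mathrm{A}}_{11}$ inside $\mathrm{I}_{26}$ are shown to hold in $\Pi_1^{\mathrm{orb}}(\mathbb{B}^{\circ}/\Gamma)$ after imposing $T_i^2=1$. Producing those extra relations from the chamber inclusion $P\subset G$ is precisely the content of \cite{Heckman 2013} that this paper is feeding into, and neither your second paragraph (``the real orbifold structure is controlled by $\mathrm{I}_{26}$ with the right incidence pattern'') nor your third supplies it --- the former only re-derives what Allcock--Basak already give, namely that the group is a quotient of $W(\mathrm{I}_{26})$. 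Your stated ``main obstacle'' is accordingly backwards: the issue is not to rule out relations beyond the braid and Coxeter relations, but to exhibit the specific additional relations that collapse $W(\mathrm{I}_{26})$ onto $M\wr 2$; without them the theorem's conclusion does not follow.

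Two smaller points. Your list of normal subgroups of $M\wr 2$ is garbled: the diagonal copy of $M$ is not normal in $M\times M$ (since $M$ is non-abelian simple) and the centre of $M\wr 2$ is trivial; the normal subgroups are exactly $1$, $M\times M$ and $M\wr 2$, which is what yields the three possible quotients $M\wr 2$, $S_2$, $1$ --- so your conclusion there is right even though the justification is not. The identification $P=G\cap G'$ is harmless and correct (one inclusion is Theorem~\ref{projective plane of order 3 theorem} together with its projective dual, the other is immediate because the walls of $P$ are among the walls of $G$ and $G'$), but it is not by itself the mechanism that produces the deflation relations.
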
 

This provides a partial positive answer to the monstrous proposal of Daniel Allcock 
\cite{Allcock 2009}.

\noindent
Gert Heckman, Radboud University Nijmegen: g.heckman@math.ru.nl
\newline
Sander Rieken, Radboud University Nijmegen: s.rieken@math.ru.nl

\end{document}